\definecolor{newpink}{HTML}{fc309f}
\newtheoremstyle{break}
  {\topsep}{\topsep}%
  {\itshape}{}%
  {\bfseries}{}%
  {\newline}{}%
\theoremstyle{break}
\newtheorem{theorem}{Theorem}[section]
\newtheorem{lemma}[theorem]{Lemma}
\newtheorem*{remark}{Remark}
\newtheorem{definition}[theorem]{Definition}
\theoremstyle{definition}
\newcommand{\R}{\mathbb{R}}
\newcommand{\threespace}{\,\,\,}
\newcommand{\duality}[2]{ \langle #1 , #2 \rangle }
\newcommand{\jump}[1]{[\![ \nabla #1 ]\!]} 
\newcommand{\fem}[1]{#1_h} 
\newcommand{\HH}{\ensuremath{\operatorname H}\xspace}
\newcommand{\LL}{\ensuremath{\operatorname L}\xspace}
\newcommand{\leb}[1]{\ensuremath{\LL^{#1}}}
\newcommand{\sobh}[1]{\ensuremath{\HH^{#1}}}
\newcommand{\sobhO}{\ensuremath{\HH^{1}_{0}}}
\renewcommand{\vec}[1]{\ensuremath{\boldsymbol{#1}}}
\newcommand{\qp}[1]{\ensuremath{\!\left({#1}\right)}}
\newcommand{\Norm}[1]{\ensuremath{\left\|#1\right\|}}
\newcommand{\theTitle}{Adaptive Regularisation for PDE-Constrained Optimal Control}
\newcommand{\theShortAuthor}{Power \& Pryer}
\providecommand{\keywords}[1]
{
  \small	
  \textbf{Keywords:} #1
  \normalsize
}
\begin{document}
\fancyhead[R]{\theShortAuthor}
\fancyhead[C]{}
\fancyhead[L]{\theTitle}
\fancyfoot[C]{\thepage}
\parskip=12pt

\thispagestyle{plain}

\title{Adaptive Regularisation for PDE-Constrained Optimal Control}

\makeatletter
\begin{center}

{\sffamily\Large\selectfont\bfseries\@title\par}

{\large Jenny Power\textsuperscript{a,$*$}, Tristan Pryer\textsuperscript{a,b} \par}  
        \vspace{0.5em}  
        
        {\small 
        \textsuperscript{a}Department of Mathematical Sciences, University of Bath, Bath, BA27AY, United Kingdom \par
        \textsuperscript{b}Institute for Mathematical Innovation, Bath, BA27AY, United Kingdom 
        }
\end{center}
\makeatother

\footnotetext{\textsuperscript{$*$}Corresponding author. \par Email addresses: \texttt{jip30@bath.ac.uk} (Jenny Power), \texttt{tmp38@bath.ac.uk} (Tristan Pryer)}

\rule{\textwidth}{0.1pt}
\vspace{-2em}
\section*{Abstract}
PDE constrained optimal control problems require regularisation to
ensure well-posedness, introducing small perturbations that make the
solutions challenging to approximate accurately. We propose a finite
element approach that couples both regularisation and discretisation
adaptivity, varying both the regularisation parameter and mesh-size
locally based on rigorous a posteriori error estimates aiming to
dynamically balance induced regularisation and discretisation errors,
offering a robust and efficient method for solving these problems.  We
demonstrate the efficacy of our analysis with several numerical
experiments.    

\keywords{
PDE-constrained optimal control,
Adaptive regularisation,
Finite element methods,
A posteriori error estimates
} \\
\rule{\textwidth}{0.1pt}

\section{Introduction}
Optimal control problems constrained by partial differential equations
(PDEs) are fundamental in various scientific and engineering
applications. These problems play an important role in areas such as
fluid dynamics \cite{DolgovStoll:2017}, heat
conduction \cite{HarbrechtKunothSimonciniUrban:2023}, structural
optimisation \cite{KolvenbachLassUlbrich:2018}, crystal
growth \cite{ChunHesthaven:2008}, and medical treatment planning, such
as radiotherapy \cite{CoxHattamKyprianouPryer:2023}. Solving these
problems efficiently and accurately is essential for making
advancements in these domains.

Traditionally, the mathematical framework for PDE-constrained optimal
control problems, particularly those involving linear and nonlinear
elliptic or parabolic PDEs, has been
well-established \cite{BieglerGhattasHeinkenschlossBloemen-Waanders:2003,ReesDollarWathen:2010,De-los-Reyes:2015}. These
formulations typically lead to first-order optimality conditions,
where adjoint states or Lagrange multipliers are introduced to derive
the necessary conditions for optimality. Such formulations have paved
the way for numerous numerical methods, providing a robust theoretical
foundation for solving these problems.

Common numerical approaches for solving these problems involve
discretising the continuous optimal control problem using finite
element methods (FEM) \cite{AllendesFuicaOtarola:2020,
BrennerGedickeSung:2018}, spectral
methods \cite{RodenMills-WilliamsPearsonGoddard:2022}, or wavelet
methods \cite{Kunoth:2005}. Among these, FEM is widely favoured for
its flexibility in handling complex geometries and boundary
conditions, making it well-suited for problems involving non-smooth
solutions or intricate domains. However, accurately solving such
PDE-constrained control problems often comes with high computational
cost, particularly when dealing with large-scale or high-dimensional
systems.

A significant aspect of PDE-constrained optimisation is the choice of
the regularisation, which is typically introduced to ensure
well-posedness and stability of the problem. However, choosing an
appropriate regularisation parameter can be challenging, especially
when dealing with non-smooth or complex solutions. While much of the
existing work has focused on a priori analysis of FEMs \cite{langer2024adaptive},
less attention has been given to adaptive strategies that adjust the
regularisation parameter dynamically based on the solution's behaviour,
particularly in the context of a posteriori error analysis.

Adaptive regularisation strategies have been widely studied in the
context of inverse problems \cite{van2017learning, hong2017adaptive}, where balancing
regularisation and data fidelity is important. Given the close
relationship between inverse problems and PDE-constrained optimal
control problems \cite{clason2020optimal}, techniques from adaptive inverse problem
regularisation provide a valuable perspective for designing robust
numerical methods in control applications.

This paper presents a novel approach that addresses this gap by
introducing an adaptive strategy for regularising PDE-constrained
optimal control problems. We propose a method that leverages a
posteriori error estimates to adaptively vary the regularisation
parameter across the computational domain. This allows the
regularisation to be varied elementwise, balancing the approximation
error and regularisation inconsistency. This potentially enhances
solution accuracy whilst also reducing computational overhead by
focusing computational effort where it is most needed, enabling more
efficient handling of non-smooth solutions, boundary layers and
complex geometries.

The novelty of our approach lies in the dynamic adjustment of both the
regularisation parameter and the mesh size. This is guided by rigorous
error estimators derived from a posteriori estimates, extending
traditional adaptive finite element
methods \cite{verfurth2013posteriori, ainsworth2000posteriori}. Our
method ensures that the regularisation and discretisation errors are
balanced, leading to more efficient and accurate solutions for
nonlinear optimisation problems.

The remainder of this paper is organised as follows:
In section \ref{sec:2}, we introduce the linear quadratic optimal control
problem that forms the basis of this work. We derive the optimality
conditions and prove well-posedness, showing the equivalence between
solving the optimal control problem and a fourth-order singularly
perturbed equation. In section \ref{sec:3}, we describe the finite element
discretisation of the problem. In section \ref{sec:4}, we
derive global upper and local lower error bounds, which are then used
to develop an adaptive numerical scheme. Finally, in section \ref{sec:results}, we
demonstrate the algorithm's convergence and present numerical
results that showcase the effectiveness of our approach.

\section{Optimal Control Problem} \label{sec:2}
Let $\Omega \subset
\mathbb{R}^n$ be an open bounded Lipschitz domain with polyhedral boundary $\partial \Omega$.
For $D \subseteq \mathbb{R}^n$, where $n = 1, 2, 3$, we denote the
standard Lebesgue spaces by $\leb{p}(D)$ for $1 \leq p \leq \infty$ and Hilbert spaces $\sobh{m}(D)$ for $0 \leq m \leq \infty$.
The $\leb{2}$ inner product over
$D$ is denoted $\langle \cdot, \cdot \rangle_D$. 
Additionally, when $D = \Omega$, we omit the
subscript and write $\|\cdot\|$ and $\langle \cdot, \cdot \rangle$ to
represent the $\leb{2}(\Omega)$-norm and inner product respectively.
We let $\sobhO(D)$ be
functions in $\sobh1(D)$ with vanishing trace.

As a model problem, we consider a linear quadratic optimal control
problem (OCP) posed on $\Omega$. Let $u \in
\sobhO(\Omega)$ represent the state variable and $f \in
\leb{2}(\Omega)$ the control variable. We seek to find the optimal
state-control pair $(u, f) \in \sobhO(\Omega) \times
\leb{2}(\Omega)$ such that

\begin{equation} \label{eq:costfunctional}
  (u, f)
  =
  \arg\min_{\qp{v, g}\in \sobhO(\Omega)\times\leb{2}(\Omega)} \left( \frac{1}{2} \| v - d \|^2
  +
  \frac{\alpha}{2} \| g \|^2 \right),
\end{equation}
subject to the constraint
\begin{equation} \label{eq:PDEconstraint}
     a(v, \varphi) := \duality{\nabla v}{\nabla \varphi} = \duality{g}{\varphi}, \qquad \forall \varphi \in \sobhO(\Omega),
\end{equation}
where $d \in \leb{2}(\Omega)$ is the desired state and $\alpha \in \mathbb{R}_+$ is the regularisation
parameter, typically with $\alpha \ll 1$.

\subsection{Optimality Conditions}

Since this is a linear quadratic control problem, the optimal solution
can be obtained by solving the first-order optimality conditions,
commonly referred to as the Karush–Kuhn–Tucker (KKT) conditions
\cite{Antil2018}. For the problem described above, these conditions
are given weakly by: Find
$\qp{u,f,z} \in \sobhO(\Omega) \times \leb2(\Omega) \times \sobhO(\Omega)$
such that
\begin{align}
  \duality{\nabla u}{\nabla \varphi} &= \duality{f}{\varphi}, & & \forall \varphi \in \sobhO(\Omega), \label{eq:KKT1} \\
  \duality{\nabla z}{\nabla \psi} + \duality{u}{\psi} &= \duality{d}{\psi}, & & \forall \psi \in \sobhO(\Omega), \label{eq:KKT2} \\
  \alpha \duality{f}{\tau} - \duality{z}{\tau} &= 0, & & \forall \tau \in \leb{2}(\Omega), \label{eq:KKT3}
\end{align}
where $z$ is a Lagrange multiplier, referred to as the adjoint
variable. It can be shown that the solution $(u, f, z)$ to this problem
coincides with the optimal state and control given
by \eqref{eq:costfunctional}--\eqref{eq:PDEconstraint}.

\begin{remark}[Equivalence of OCP to a singularly perturbed equation]
  \label{thm:equivOCP1}
  The optimal control problem presented in \eqref{eq:costfunctional}
  and \eqref{eq:PDEconstraint} is equivalent to solving the singularly
  perturbed biharmonic equation
  \begin{equation*} \label{eq:biharmonic}
    \alpha \Delta^2 u + u = d \quad \text{in } \Omega,
  \end{equation*}
  with the boundary conditions
  \begin{equation*} \label{eq:biharmbcs}
    u = \Delta u = 0 \quad \text{on } \partial \Omega.
  \end{equation*}  
\end{remark}

\subsection{Primal-Dual Formulation}

It is convenient to reformulate equations
\eqref{eq:KKT1}--\eqref{eq:KKT3}
for the state variable $u$ and the adjoint variable $z$. We seek $u, z
\in \sobhO(\Omega)$ such that
\begin{align}
    \duality{\alpha \nabla u}{\nabla \varphi} - \duality{z}{\varphi} &= 0, & & \forall \varphi \in \sobhO(\Omega), \label{eq:pd1} \\
    \duality{\nabla z}{\nabla \psi} + \duality{u}{\psi} &= \duality{d}{\psi}, & & \forall \psi \in \sobhO(\Omega). \label{eq:pd2}
\end{align}

Let $X := \sobhO(\Omega) \times \sobhO(\Omega)$. We define the
bilinear form
\begin{align} 
  b((u, z), (\varphi, \psi)) := \duality{\alpha \nabla u}{\nabla \varphi} - \duality{z}{\varphi} + \duality{\nabla z}{\nabla \psi} + \duality{u}{\psi},
  \label{eq:b}
\end{align}
which allows us to reformulate \eqref{eq:pd1}--\eqref{eq:pd2} into the
following problem: Find $(u, z) \in X$ such that
\begin{align} 
  b((u, z), (\varphi, \psi)) = \duality{d}{\psi} =: L((\varphi, \psi)), \quad \forall (\varphi, \psi) \in X.
  \label{eq:bform}
\end{align}

The energy norm associated with $b$ is given by
\begin{align} \label{eq:xnorm}
    \|(u, z)\|_{X} := \left( \alpha \|\nabla u \|^2 + \|\nabla z\|^2 \right)^{\frac{1}{2}}.
\end{align}


\begin{theorem}[Well-posedness] \label{thm:wp1}
  Let $d \in \leb{2}(\Omega)$ be given. Then, there exists a unique
  pair $(u, z) \in X$ that solves \eqref{eq:bform}.
\end{theorem}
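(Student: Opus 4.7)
The plan is to apply the Lax--Milgram lemma on the Hilbert space $X = \sobhO(\Omega)\times\sobhO(\Omega)$ endowed with the inner product that induces the norm $\|\cdot\|_X$ defined in \eqref{eq:xnorm}. The problem \eqref{eq:bform} is already written in the standard variational form, so I need to verify three ingredients: $L$ is a bounded linear functional on $X$, the bilinear form $b$ is continuous on $X\times X$, and $b$ is coercive on $X$.

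Boundedness of $L$ is immediate from Cauchy--Schwarz and the Poincaré inequality on $\sobhO(\Omega)$: $|L((\varphi,\psi))| = |\duality{d}{\psi}| \le \|d\|\,\|\psi\| \le C_P\|d\|\,\|\nabla\psi\| \le C_P\|d\|\,\|(\varphi,\psi)\|_X$. For continuity of $b$, the two gradient terms are handled directly, as $\alpha\duality{\nabla u}{\nabla\varphi}$ and $\duality{\nabla z}{\nabla\psi}$ each admit the bound $\|(u,z)\|_X\|(\varphi,\psi)\|_X$ after writing $\alpha\|\nabla u\|\,\|\nabla\varphi\| = (\sqrt\alpha\|\nabla u\|)(\sqrt\alpha\|\nabla\varphi\|)$. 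The cross terms $-\duality{z}{\varphi}$ and $\duality{u}{\psi}$ require Poincaré twice: for instance $|\duality{z}{\varphi}| \le C_P^2\|\nabla z\|\,\|\nabla\varphi\| \le C_P^2\alpha^{-1/2}\|(u,z)\|_X\,\|(\varphi,\psi)\|_X$. This yields a continuity constant of order $\alpha^{-1/2}$, which is finite for each fixed $\alpha>0$ (albeit blowing up as $\alpha\to 0$; that is the singular perturbation feature noted in \Cref{thm:equivOCP1}).

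The coercivity step is in fact where the structure of $b$ pays off: testing against $(\varphi,\psi)=(u,z)$, the two off-diagonal terms cancel exactly, giving
\begin{equation*}
b((u,z),(u,z)) = \alpha\|\nabla u\|^2 - \duality{z}{u} + \|\nabla z\|^2 + \duality{u}{z} = \|(u,z)\|_X^2.
\end{equation*}
So $b$ is coercive with constant $1$, uniformly in $\alpha$. Existence and uniqueness of $(u,z)\in X$ then follow from Lax--Milgram.

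The only mildly subtle point is checking continuity of the cross terms while keeping track of the $\alpha$-weight in the $X$-norm; the argument in effect requires one to absorb a $\sqrt\alpha$ factor via Poincaré, which is the price for formulating coercivity in the natural $\alpha$-weighted energy norm. Coercivity itself is cheap because of the antisymmetric pairing between $\duality{z}{\varphi}$ and $\duality{u}{\psi}$, so no Babuška--Brezzi or inf--sup argument is needed at the continuous level.
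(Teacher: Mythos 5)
Your proposal is correct and follows essentially the same route as the paper: coercivity via the exact cancellation of the cross terms, boundedness of $b$ via Cauchy--Schwarz and Poincar\'e (your constant $\sim C_P^2\alpha^{-1/2}$ is in fact slightly sharper than the paper's $1+C_P^2\max(1,\alpha^{-1})$, but both suffice), and then Lax--Milgram. The only addition is your explicit check that $L$ is bounded, which the paper leaves implicit.
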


\begin{proof}
  We show that the bilinear form defined in \eqref{eq:b} is coercive
  and bounded. To begin, note
  \begin{align*}
    b((u, z), (u, z)) &= \alpha \|\nabla u\|^2 + \|\nabla z\|^2 \\
    &= \|(u, z)\|_{X}^2, 
  \end{align*}
  hence the problem is coercive. For boundedness apply the
  Cauchy-Schwarz inequality to \eqref{eq:b} and use the Poincar\'e
  inequality to see
  \begin{align*} 
    b((u, z), (\varphi, \psi)) &\leq C_B \|(u, z)\|_{X} \|(\varphi, \psi)\|_{X},
  \end{align*}
  where
  \begin{equation*} 
    C_B = \left(1 + C_p^2 \max \left(1, \alpha^{-1} \right)\right),
  \end{equation*}
  and $C_p$ is the Poincar\'e constant.
  
  Since the bilinear form \eqref{eq:b} is both coercive and bounded,
  the Lax-Milgram theorem guarantees the existence of a unique solution to
  \eqref{eq:bform}.
\end{proof}

\section{Finite Element Discretisation}  \label{sec:3}

Let $\mathcal{T}$ be a conforming, shape-regular partition of $\Omega$ into simplices (or quadrilaterals/hexahedra) $T$. The diameter of an element $T$ is denoted by $h_T$, and $h = \max\{ h_T : T \in \mathcal{T} \}$. We let $\mathcal{E}$ denote the set of  
$(n-1)$-dimensional facets (edges in 2D, faces in 3D)
 associated with the partition $\mathcal{T}$ and let $h_E$ denote the size of a facet $E$.

For each element $T \in \mathcal{T}$, we define the local finite element space $\mathcal{R}_k(T)$ as
\begin{equation*}
  \mathcal{R}_k(T) = \left\{ \begin{array}{cl}
    \mathbb{P}_k(T) & \textrm{if } T \textrm{ is a simplex}, \\
    \mathbb{Q}_k(T) & \textrm{if } T \textrm{ is an affine quadrilateral/hexahedron},
  \end{array}\right.
\end{equation*}
where $\mathbb{P}_k(T)$ denotes the polynomials of total degree $k$ on $T$, and $\mathbb{Q}_k(T)$ denotes the polynomials of degree less than or equal to $k$ in each variable, mapped from a reference element through an affine mapping.

Now, for $k \geq 1$, we define the global finite element space as
\begin{equation*}
  V_h := \{ v_h \in C^0(\Omega) : v_h|_T \in \mathcal{R}_k(T) \; \forall \, T \in \mathcal{T} \} \cap \sobhO(\Omega).
\end{equation*}

We denote the patch of an element $T$ by $\widetilde{T}$, where
\begin{equation*}
    \widetilde{T} := \{ S \in \mathcal{T} : \overline{T} \cap \overline{S} \neq \emptyset \}.
\end{equation*}

We also denote the pair of elements sharing a facet $E$ by $\widehat{T}:= T^{+} \cup T^{-}$. We define the jump operator of a vector quantity $\vec{v}$ over a facet by
\begin{equation*} 
  [\![ \vec{v} ]\!]  = \vec{v} \rvert_{T^+} \cdot \vec{n}_{T^+} + \vec{v} \rvert_{T^-} \cdot \vec{n}_{T^-} .
\end{equation*}

Finally, we define the space $X_h = (V_h, V_h) \subset X$.

We adopt an ``optimise then discretise" approach and approximate
\eqref{eq:bform} over the finite element space. Additionally, we
replace the constant parameter $\alpha$ with a piecewise function,
allowing its value to vary elementwise. This is the heart of our
adaptive regularisation strategy. Let $(u_h, z_h) \in X_h$ be the
finite element approximation of $(u, z)$. We introduce the elementwise
discontinuous function $\alpha_h$, which we take to be piecewise constant
for exposition, where $\alpha_h |_T \in \mathbb{R}$ and $\alpha \leq \alpha_h |_T \,\leq 1$ for each element $T$. We define the bilinear form
\begin{align} 
  b_h((\fem{u}, \fem{z}), (\varphi_h, \psi_h))
  := \duality{\alpha_h \nabla \fem{u}}{\nabla \varphi_h}
  - \duality{\fem{z}}{\varphi_h}
  + \duality{\nabla \fem{z}}{\nabla \psi_h}
  + \duality{\fem{u}}{\psi_h}. \label{eq:bh}
\end{align}

We then seek a pair $(u_h, z_h) \in X_h$ such that
\begin{align} 
    b_h((\fem{u}, \fem{z}), (\varphi_h, \psi_h)) =  L((\varphi_h, \psi_h)), \quad \forall (\varphi_h, \psi_h) \in X_h. \label{eq:bhform}
\end{align}

The numerical scheme naturally induces an energy norm,
\begin{align} 
  \Norm{(u_h, z_h)}_{X_h}
  :=
  \left(
    \Norm{\alpha_h^{\frac{1}{2}} \nabla u_h}^2
    +
    \Norm{\nabla z_h}^2
  \right)^{\frac{1}{2}}. \label{eq:Xhnorm}
\end{align}

\begin{lemma}[Well-posedness of the FE approximation] \label{lem:wpd}
  Let $d \in \leb{2}(\Omega)$ be given data. Then, there exists a unique pair
  $(u_h, z_h) \in X_h$ that solves \eqref{eq:bhform}.
\end{lemma}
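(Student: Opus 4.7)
My plan is to mimic the proof of Theorem \ref{thm:wp1} in the discrete setting: establish that the discrete bilinear form $b_h$ is coercive and bounded on $X_h \times X_h$ with respect to the energy norm $\Norm{\cdot}_{X_h}$ from \eqref{eq:Xhnorm}, and then invoke Lax--Milgram. Because $X_h$ is a conforming finite-dimensional subspace of $X$, coercivity alone is enough to rule out a non-trivial kernel and hence guarantee existence; I retain boundedness both to parallel the continuous proof and to have it available for the a posteriori analysis to follow.

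Coercivity is immediate. Testing \eqref{eq:bh} against $(\varphi_h, \psi_h) = (\fem u, \fem z)$, the two cross terms $-\duality{\fem z}{\fem u}$ and $\duality{\fem u}{\fem z}$ cancel, so that
\begin{equation*}
  b_h((\fem u, \fem z), (\fem u, \fem z))
  = \Norm{\alpha_h^{1/2}\nabla \fem u}^2 + \Norm{\nabla \fem z}^2
  = \Norm{(\fem u, \fem z)}_{X_h}^2,
\end{equation*}
with constant $1$, independent of both the mesh and the choice of $\alpha_h$.

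For boundedness I would apply Cauchy--Schwarz to each of the four terms in \eqref{eq:bh}. The gradient terms pair directly with the weighted norm $\Norm{\alpha_h^{1/2}\nabla \cdot}$. For the two $\leb 2$ cross terms I would use Poincaré to dominate $\Norm{\fem u}$ and $\Norm{\varphi_h}$ (resp.\ $\Norm{\fem z}$, $\Norm{\psi_h}$) by their gradients, and then absorb the weight via the pointwise estimate $\Norm{\nabla v_h} \leq \alpha^{-1/2}\Norm{\alpha_h^{1/2}\nabla v_h}$, which uses the uniform lower bound $\alpha \leq \alpha_h$. Collecting the four contributions produces a bound of the form
\begin{equation*}
  b_h((\fem u,\fem z),(\varphi_h,\psi_h)) \leq \widetilde C_B \, \Norm{(\fem u,\fem z)}_{X_h}\Norm{(\varphi_h,\psi_h)}_{X_h},
\end{equation*}
with $\widetilde C_B$ depending only on $C_p$ and $\alpha^{-1/2}$, in direct analogy with $C_B$ of Theorem \ref{thm:wp1}. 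Continuity of the right-hand side $L$ is likewise automatic from Cauchy--Schwarz and Poincaré.

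The only delicate point is that $\alpha_h$ is genuinely spatially varying and piecewise constant, so it cannot be pulled outside the integrals and the argument must keep it inside the weighted norm throughout, relying on the two-sided bounds $\alpha \leq \alpha_h \leq 1$ only where a scalar comparison is actually needed. Once coercivity and boundedness are in hand, a direct application of Lax--Milgram to \eqref{eq:bhform} on $X_h$ delivers the unique pair $(\fem u, \fem z)$ as required.
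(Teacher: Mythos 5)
Your proposal is correct and follows essentially the same route as the paper: coercivity of $b_h$ with constant $1$ from the cancellation of the cross terms, boundedness via Cauchy--Schwarz and Poincar\'e using the lower bound $\alpha_h \geq \alpha > 0$, and then Lax--Milgram. The paper's proof is just a terser version of the same argument, deferring to Theorem \ref{thm:wp1} for the details you spell out.
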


\begin{proof}
  The proof follows the same steps as in Theorem \ref{thm:wp1}.
  The bilinear form $b_h$ \eqref{eq:bh} is coercive and bounded with respect to the norm $\Norm{\cdot}_{X_h}$ defined in \eqref{eq:Xhnorm}. The boundedness constant $C_B$ is the same as on the continuum case,
  noting that $\alpha_h \geq \alpha > 0$.
\end{proof}

\begin{theorem}[Galerkin Orthogonality] \label{thm:orthogonality}
Let $(u, z)$ solve \eqref{eq:bform} and $(u_h, z_h)$ solve \eqref{eq:bhform}. Then,
\begin{align} \label{eq:orthogonality}
    b\big((u - \fem{u}, z - \fem{z}), (\varphi_h, \psi_h)\big) + \duality{(\alpha - \alpha_h)\nabla \fem{u}}{\nabla \varphi_h} = 0, \quad \forall (\varphi_h, \psi_h) \in X_h.
\end{align}
\end{theorem}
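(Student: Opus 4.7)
The plan is a direct consistency computation: because $X_h \subset X$, I can test the continuous problem against discrete test functions and subtract the discrete equation, then bookkeep the mismatch between $\alpha$ and $\alpha_h$.

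First, since $(\varphi_h,\psi_h) \in X_h \subset X$, equation \eqref{eq:bform} gives
\[
  b((u,z),(\varphi_h,\psi_h)) = L((\varphi_h,\psi_h)),
\]
and \eqref{eq:bhform} gives
\[
  b_h((u_h,z_h),(\varphi_h,\psi_h)) = L((\varphi_h,\psi_h)).
\]
Subtracting eliminates $L$ and yields
\[
  b((u,z),(\varphi_h,\psi_h)) - b_h((u_h,z_h),(\varphi_h,\psi_h)) = 0.
\]

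Next I would expand both forms term by term using \eqref{eq:b} and \eqref{eq:bh}. The three pairs of terms not involving the diffusion coefficient (namely the $\langle z,\varphi_h\rangle$, $\langle \nabla z,\nabla\psi_h\rangle$ and $\langle u,\psi_h\rangle$ contributions) combine to give exactly $-\langle z-z_h,\varphi_h\rangle + \langle\nabla(z-z_h),\nabla\psi_h\rangle + \langle u-u_h,\psi_h\rangle$, which are three of the four summands of $b((u-u_h,z-z_h),(\varphi_h,\psi_h))$. The remaining diffusion terms read
\[
  \alpha\,\duality{\nabla u}{\nabla\varphi_h} - \duality{\alpha_h \nabla u_h}{\nabla\varphi_h}.
\]

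The key algebraic step, and the only one that requires care, is to add and subtract $\alpha\,\duality{\nabla u_h}{\nabla\varphi_h}$:
\[
  \alpha\,\duality{\nabla u}{\nabla\varphi_h} - \duality{\alpha_h \nabla u_h}{\nabla\varphi_h}
  = \alpha\,\duality{\nabla(u-u_h)}{\nabla\varphi_h} + \duality{(\alpha-\alpha_h)\nabla u_h}{\nabla\varphi_h}.
\]
The first piece completes the missing summand of $b((u-u_h,z-z_h),(\varphi_h,\psi_h))$, while the second piece is precisely the correction term that must migrate to the right-hand side. Rearranging gives \eqref{eq:orthogonality}.

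There is no real obstacle here: the argument is purely algebraic, and the $\alpha_h$-dependent correction appears naturally from the splitting above. The only subtlety worth flagging is that $\alpha_h$ is elementwise constant and hence the product $\alpha_h\nabla u_h$ is well defined in $\leb{2}(\Omega)$, so $\duality{(\alpha-\alpha_h)\nabla u_h}{\nabla\varphi_h}$ makes sense and can be (and in the a posteriori analysis later, will need to be) localised element by element.
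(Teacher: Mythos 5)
Your proposal is correct and follows essentially the same route as the paper: test the continuous problem with discrete functions using $X_h\subset X$, subtract the discrete equation, and absorb the $\alpha$ versus $\alpha_h$ mismatch into the correction term $\duality{(\alpha-\alpha_h)\nabla u_h}{\nabla\varphi_h}$. The explicit add-and-subtract of $\alpha\,\duality{\nabla u_h}{\nabla\varphi_h}$ is exactly the content of the paper's ``Noting that $b_h = b - \duality{(\alpha-\alpha_h)\nabla u_h}{\nabla\varphi_h}$'' step, just written out in more detail.
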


\begin{proof}
  Since $X_h \subset X$, we have
  \begin{equation*}
    b((u, z), (\varphi_h, \psi_h)) = L((\varphi_h, \psi_h)), \quad \forall (\varphi_h, \psi_h) \in X_h.
  \end{equation*}
  Subtracting \eqref{eq:bhform} from this expression, we obtain
  \begin{align*} 
    b((u, z), (\varphi_h, \psi_h)) - b_h((\fem{u}, \fem{z}), (\varphi_h, \psi_h)) = 0, \quad \forall (\varphi_h, \psi_h) \in X_h.
  \end{align*}
  Noting that
  \begin{equation*}
    b_h((\fem{u}, \fem{z}), (\varphi_h, \psi_h)) = b((\fem{u}, \fem{z}), (\varphi_h, \psi_h)) - \duality{(\alpha - \alpha_h)\nabla \fem{u}}{\nabla \varphi_h}
  \end{equation*}
  concludes the proof.
\end{proof}

Let the error between the continuous solution and the finite element
approximation be denoted by $\vec{e} := (e_u, e_z) = (u - \fem{u}, z -
\fem{z})$.

\begin{lemma}[Strang's Lemma] \label{lem:strang}
  Let $(u, z) \in X$ solve \eqref{eq:bform}, and let $(u_h, z_h)$ be
  the finite element approximation solving \eqref{eq:bhform}. Then,
  there exists a constant $C > 0$ such that
  \begin{align}
    \norm{\vec{e}}_{X_h}
    \leq
    C
    \left(
      \inf_{(v_h, w_h) \in X_h}
      \| (u - v_h, z - w_h) \|_{X_h}
      +
      \norm{\alpha_h^{-\frac{1}{2}}(\alpha_h - \alpha)\nabla u}
    \right).
  \end{align}
\end{lemma}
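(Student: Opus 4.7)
The proof plan follows the classical Strang template for a variational crime, where the ``crime'' here is the replacement of the constant $\alpha$ by the piecewise constant $\alpha_h$ in the discrete bilinear form.

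First, I would introduce the standard splitting. Pick an arbitrary $(v_h, w_h) \in X_h$ and write the error as $\vec{e} = \vec{\xi} + \vec{\eta}$, where $\vec{\xi} = (u - v_h, z - w_h)$ is the approximation error, and $\vec{\eta} = (v_h - u_h, w_h - z_h) \in X_h$ is the discrete remainder. Only the $\vec{\xi}$ piece will survive the infimum, so the task reduces to bounding $\vec{\eta}$ in $\|\cdot\|_{X_h}$ by $\vec{\xi}$ plus a consistency term.

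Next, I would use coercivity of $b_h$ with respect to $\|\cdot\|_{X_h}$ (from Lemma \ref{lem:wpd}) to obtain $\|\vec{\eta}\|_{X_h}^2 \le b_h(\vec{\eta}, \vec{\eta})$, and then decompose $\vec{\eta} = \vec{e} - \vec{\xi}$. The term $b_h(-\vec{\xi}, \vec{\eta})$ is controlled by $C_B \|\vec{\xi}\|_{X_h} \|\vec{\eta}\|_{X_h}$ using boundedness of $b_h$. For the term $b_h(\vec{e}, \vec{\eta})$, the key identity is
\begin{equation*}
  b_h(\vec{e}, \vec{\eta}) = b(\vec{e}, \vec{\eta}) + \duality{(\alpha_h - \alpha)\nabla e_u}{\nabla \varphi_h},
\end{equation*}
where $\vec{\eta} = (\varphi_h, \psi_h)$. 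Applying Galerkin orthogonality (Theorem \ref{thm:orthogonality}) converts $b(\vec{e}, \vec{\eta})$ into $\duality{(\alpha_h - \alpha)\nabla u_h}{\nabla \varphi_h}$, and combining the two contributions yields the clean consistency residual
\begin{equation*}
  b_h(\vec{e}, \vec{\eta}) = \duality{(\alpha_h - \alpha)\nabla u}{\nabla \varphi_h}.
\end{equation*}

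Finally, I would control this residual by inserting and removing $\alpha_h^{1/2}$, namely
\begin{equation*}
  |\duality{(\alpha_h - \alpha)\nabla u}{\nabla \varphi_h}|
  \leq \Norm{\alpha_h^{-\frac{1}{2}}(\alpha_h - \alpha)\nabla u} \, \Norm{\alpha_h^{\frac{1}{2}}\nabla \varphi_h}
  \leq \Norm{\alpha_h^{-\frac{1}{2}}(\alpha_h - \alpha)\nabla u} \, \Norm{\vec{\eta}}_{X_h}.
\end{equation*}
Dividing through by $\|\vec{\eta}\|_{X_h}$, applying the triangle inequality $\|\vec{e}\|_{X_h} \leq \|\vec{\xi}\|_{X_h} + \|\vec{\eta}\|_{X_h}$, and taking the infimum over $(v_h, w_h) \in X_h$ yields the stated bound with $C = 1 + C_B$. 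The one subtlety, which is the only real obstacle, is making sure the consistency term is weighted by the right power of $\alpha_h$ so that it matches the $X_h$-norm on the test side; since $\alpha_h \geq \alpha > 0$ everywhere, the weighting $\alpha_h^{-1/2}$ is well defined and produces exactly the residual in the statement.
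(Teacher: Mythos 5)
Your proposal is correct and follows essentially the same route as the paper's proof: coercivity of $b_h$ in the $\Norm{\cdot}_{X_h}$-norm applied to the discrete remainder, boundedness for the approximation part, the weighted Cauchy--Schwarz step with $\alpha_h^{\pm 1/2}$ to produce the consistency residual $\Norm{\alpha_h^{-\frac{1}{2}}(\alpha_h-\alpha)\nabla u}$, then the triangle inequality and the infimum. The only cosmetic difference is that you reach the residual $\duality{(\alpha_h-\alpha)\nabla u}{\nabla\varphi_h}$ by explicitly invoking Theorem \ref{thm:orthogonality} and cancelling $\nabla u_h$ against $\nabla e_u$, whereas the paper computes $b_h((u_h-u,z_h-z),\cdot)$ directly from the definitions of $b$ and $b_h$; the algebra is identical.
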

\begin{proof}
  To begin, note that for any $(v_h, w_h) \in X_h$, we have
  \begin{align*}
    \Norm{(u_h - v_h, z_h - w_h)}_{X_h}^2
      &=
      b_h\left(
        (u_h - v_h, z_h - w_h),
        (u_h - v_h, z_h - w_h)
      \right) \\
    \begin{split} 
      &=
      b_h\left(
        (u_h - u, z_h - z),
        (u_h - v_h, z_h - w_h)
      \right) \\
      &\quad +
      b_h\left(
        (u - v_h, z - w_h),
        (u_h - v_h, z_h - w_h)
      \right).
    \end{split}
  \end{align*}
  Using the definitions of the bilinear forms \eqref{eq:b} and \eqref{eq:bh}, we obtain
  \begin{align} 
    \begin{split}
      \Norm{(u_h - v_h, z_h - w_h)}_{X_h}^2
      =
      &\duality{(\alpha_h - \alpha)\nabla u}{\nabla (u_h - v_h)}         \\ & 
      +
      b_h\left(
        (u - v_h, z - w_h),
        (u_h - v_h, z_h - w_h)
      \right).
    \end{split}
    \label{eq:temp}
  \end{align}
  Applying the Cauchy-Schwarz inequality to the first term, we get
  \begin{align*}
      \duality{(\alpha_h - \alpha)\nabla u}{\nabla (u_h - v_h)}         
      &=
      \duality{\alpha_h^{-\frac{1}{2}}(\alpha_h - \alpha)\nabla u}{\alpha_h^{\frac{1}{2}} \nabla (u_h - v_h)}         
      \\
      &\leq
      \Norm{\alpha_h^{-\frac{1}{2}}(\alpha_h - \alpha)\nabla u}
      \Norm{\alpha_h^{\frac{1}{2}}\nabla (u_h - v_h)}.
  \end{align*}
  By definition of $\Norm{\cdot}_{X_h}$ \eqref{eq:Xhnorm}
  \begin{equation*}
    \duality{(\alpha_h - \alpha)\nabla u}{\nabla (u_h - v_h)} \leq \Norm{\alpha_h^{-\frac{1}{2}}(\alpha_h - \alpha)\nabla u} \Norm{(u_h - v_h, z_h - w_h)}_{X_h}.
  \end{equation*}
  Using boundedness of $b_h$ from Lemma \ref{lem:wpd}, we obtain from \eqref{eq:temp}
  \begin{align}
    \Norm{(u_h - v_h, z_h - w_h)}_{X_h} \leq C_B \Norm{(u - v_h, z - w_h)}_{X_h} + \Norm{\alpha_h^{-\frac{1}{2}}(\alpha_h - \alpha)\nabla u}.
  \end{align}
  Making use of the triangle inequality 
   \begin{align}
    \Norm{u-u_h, z-z_h}_{X_h} \leq \Norm{u-v_h, z-w_h}_{X_h} + \Norm{u_h-v_h, z_h-w_h}_{X_h}
   \end{align}
  and taking the infimum concludes the proof.
\end{proof}

\begin{remark}[{A priori coupling}]
  Lemma \ref{lem:strang} suggests an appropriate a priori coupling between \( \alpha_h \) and the mesh size $h$. Indeed, for $u, z \in H^{r+1}(\Omega)$,
  \begin{equation*}
      \inf_{(v_h, w_h) \in X_h}
      \| (u - v_h, z - w_h) \|_{X_h}
      \leq
      C h^{\min(k,r)} \left( \Norm{u}_{H^{r+1}(\Omega)} + \Norm{z}_{H^{r+1}(\Omega)} \right),
  \end{equation*}
  for a mesh-size-independent constant $C$. 
  The error in the inconsistency term is controlled by
  \begin{equation*}
    \norm{\alpha_h^{-\frac{1}{2}}(\alpha_h - \alpha)\nabla u} \leq \Norm{\alpha_h^{-\frac{1}{2}}}_\infty \Norm{\nabla u} \Norm{\alpha_h - \alpha}.
  \end{equation*}
  Thus, in an a priori setting, one would ideally choose $\alpha_h$ such that
  \begin{equation*}
    \Norm{\alpha_h - \alpha} \sim h^{\min(k,r)}
  \end{equation*}
  to balance the approximation and inconsistency terms in Strang's Lemma. For piecewise-linear basis functions $k=1$ and $r\geq k$ this implies the following a priori relationship between $\alpha_h$ and $h$
  \begin{equation*}
    \Norm{\alpha_h - \alpha} \sim h,
  \end{equation*}
  to ensure consistent asymptotic behaviour.
\end{remark}

\begin{remark}[Target Approximation]
It is relevant to consider how well the discrete
state $u_h$ approximates the desired target $d$. Such estimates have been considered in \cite{langer2024adaptive,langer2022robust} for optimal control problems using energy (meaning $H^{-1}$-norm) regularisation where $\alpha_h |_T \sim h_T^2$ . 
As shown in \cite[Theorem 4.1]{neumuller2021regularization}, for an optimal control problem using an $\leb{2}$ regularisation norm, regularisation error estimates of the
form
\begin{equation*}
  \Norm{u-d} \leq c \, \alpha^\frac{s}{4} \Norm{d}_{\sobh{s}(\Omega)}, \quad s\in
  [0,1],
\end{equation*}
can be derived when extra regularity is imposed on the target $d$. These estimates indicate that if $\alpha$ is chosen too large, the target may never
be well resolved. For the problem under consideration in this work, assuming $d\in \sobhO(\Omega)$, one can build on the results above to show
\begin{equation*}
  \Norm{ u_h - d} \leq \qp{C_1(\alpha) h^{\min(k,r)} + C_2 \alpha^{\frac{s}{4}} }\Norm{d}_{\sobh{s}(\Omega)},
\end{equation*}
although this estimate may be sharpened using appropriate duality arguments.
\end{remark}

\section{Residual Error Estimates}  \label{sec:4}
In this section, we show an a posteriori upper and lower bound on the
error. The section is primarily focussed on the proof of the following
two theorems:

\begin{theorem}[Upper bound]
  \label{the:upper}
  Let $(\fem{u}, \fem{z}) \in X_h$ be the finite element solution to
  \eqref{eq:bhform}. Let the residual and jump estimators be defined
  by
  \begin{align*}
    \begin{split}
      R_z^2 &:= h_T^2 \norm{d + \Delta \fem{z} - \fem{u}}_T^2
      \ \ \qquad 
      J_z^2 := h_E \norm{\jump{\fem{z}}}_E^2,
      \\ 
      R_u^2 &:= h_T^2 \alpha^{-1} \norm{\alpha \Delta \fem{u} + \fem{z}}_T^2 \qquad
      J_u^2 := h_E \alpha^{-1} \norm{\alpha \jump{\fem{u}}}_E^2
    \end{split}
  \end{align*}
  Let the regularisation and discretisation estimators respectively
  be defined by
  \begin{align} 
    \eta_{\alpha, T}^2 &:= \norm{\alpha^{-\frac{1}{2}} (\alpha_h - \alpha) \nabla \fem{u} }_T^2, \label{eq:upestimalpha} \\
    \eta_{h, T}^2 &:= R_z^2 + R_u^2 + \frac{1}{2} \left( \sum_{E \in \mathcal{E}_T} J_u^2 + J_z^2 \right) \label{eq:upestimh}
  \end{align}
  and define the global estimators as
\begin{align*}    
    \eta_\alpha^2 := \sum_{T \in \mathcal{T}} \eta_{\alpha, T}^2 \qquad
    \eta_h^2 :=\sum_{T \in \mathcal{T}} \eta_{h, T}^2.
\end{align*}
  Then, there exists a constant $\widetilde{C}$, independent of $h$,
  $\alpha$, and $\alpha_h$, such that
  \begin{align}
    \|\vec{e}\|_X^2 
    &\leq
    \widetilde{C} 
    \left( 
    \eta_\alpha^2
    +
    \eta_h^2
    \right).
\end{align}  
\end{theorem}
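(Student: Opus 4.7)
The plan is to mimic the classical residual-based a posteriori argument, adapted to handle both the perturbation $\alpha_h - \alpha$ and the asymmetric $\alpha$-weighting in the energy norm. Coercivity of $b$ on $X$ gives $\Norm{\vec{e}}_X^2 = b(\vec{e}, \vec{e})$, since the off-diagonal contributions $-\duality{z}{u} + \duality{u}{z}$ cancel when the test pair equals the trial pair. I then insert a Scott--Zhang quasi-interpolant $\Pi_h \vec{e} := (\Pi_h e_u, \Pi_h e_z) \in X_h$ and split
\[
\Norm{\vec{e}}_X^2 = b(\vec{e}, \vec{e} - \Pi_h\vec{e}) + b(\vec{e}, \Pi_h\vec{e}).
\]
The second summand is handled immediately by Theorem \ref{thm:orthogonality}, which reduces it to $-\duality{(\alpha - \alpha_h)\nabla \fem{u}}{\nabla \Pi_h e_u}$. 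A weighted Cauchy--Schwarz, followed by $H^1$-stability of $\Pi_h$ and the cancellation of $\alpha^{1/2}\cdot\alpha^{-1/2}$, bounds this by $C\eta_\alpha \Norm{\vec{e}}_X$.

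For the first summand, I use that $(u,z)$ solves \eqref{eq:bform} to write
$b(\vec{e}, \vec{e} - \Pi_h\vec{e}) = L(\vec{e} - \Pi_h\vec{e}) - b((\fem{u}, \fem{z}), \vec{e} - \Pi_h\vec{e})$,
and then integrate by parts elementwise on the two diffusion contributions. The volumetric terms collapse to the strong residuals $\alpha\Delta \fem{u} + \fem{z}$ and $d + \Delta \fem{z} - \fem{u}$ on each $T$, while the boundary contributions assemble across shared facets into jumps of the form $\alpha\jump{\fem{u}}$ and $\jump{\fem{z}}$. Elementwise Cauchy--Schwarz followed by the standard Scott--Zhang estimates $\Norm{\phi - \Pi_h\phi}_T \leq C h_T\Norm{\nabla\phi}_{\widetilde{T}}$ and $\Norm{\phi - \Pi_h\phi}_E \leq C h_E^{1/2}\Norm{\nabla\phi}_{\widetilde{T}}$ then introduces exactly the weights $h_T$ and $h_E^{1/2}$ appearing in the definitions of $R_u, R_z, J_u, J_z$. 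Summing over $\mathcal{T}$ and $\mathcal{E}$ with a finite-overlap argument on the patches $\widetilde{T}$ yields $|b(\vec{e}, \vec{e} - \Pi_h\vec{e})| \leq C\eta_h \Norm{\vec{e}}_X$. Combining the two bounds, dividing by $\Norm{\vec{e}}_X$, and using $(a+b)^2 \leq 2(a^2+b^2)$ gives the theorem.

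The main obstacle I anticipate is keeping track of the $\alpha$-weights so that $\widetilde{C}$ is genuinely independent of $\alpha$. The $u$-component of $\Norm{\vec{e}}_X$ controls only $\alpha^{1/2}\Norm{\nabla e_u}$, so wherever the test function is paired with a $u$-residual or $u$-jump I must simultaneously factor out an $\alpha^{-1/2}$ weight on the residual side; this is precisely what produces the $\alpha^{-1}$ prefactors in $R_u^2$ and $J_u^2$, and explains why the $z$-residual and $z$-jump carry no such weight. A secondary subtlety is that the Scott--Zhang choice (as opposed to raw Cl\'ement) is needed to guarantee $\Pi_h e_u, \Pi_h e_z \in \sobhO(\Omega)$ at boundary nodes; beyond these points the argument is a direct adaptation of the standard Verf\"urth-type a posteriori framework.
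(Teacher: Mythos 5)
Your proposal is correct and follows essentially the same route as the paper's own proof: coercivity to write $\|\vec{e}\|_X^2 = b(\vec{e},\vec{e})$, the inconsistent Galerkin orthogonality of Theorem \ref{thm:orthogonality} producing the $\eta_\alpha$ term, elementwise integration by parts yielding the strong residuals and facet jumps, and Scott--Zhang estimates with the $\alpha$-weights distributed exactly as you describe. The only difference is cosmetic ordering (you split off $\Pi_h\vec{e}$ before integrating by parts, the paper integrates by parts first and then inserts $I_h$), which leads to the identical estimators and constant structure.
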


\begin{remark}[A posteriori inconsistency]
  Compare the structure of the a posteriori inconsistency term
  $\eta_{\alpha, T}$ with the a priori version in Lemma
  \ref{lem:strang}. The key idea behind the a posteriori methodology
  we describe is to refine both $\alpha_h$ and $h$ in such a way that
  the inconsistent regularisation is balanced with the approximation
  error.
\end{remark}

\begin{theorem}[Lower bound]
  \label{the:lower}
  Using the notation from Theorem \ref{the:upper}, let $d_h$ be an
  arbitrary function in $V_h$. We assume $u \in \sobh{2}(\Omega)$ and $z \in \sobh{2}(\Omega)$.
  Then, the following local lower error
  bounds hold:
  \begin{align}
    R_z^2 + R_u^2  &\leq   c_1 h_T^2 \norm{d-d_h}_T^2 + c_2 \qp{\alpha \Norm{\nabla e_u}_T^2 + \Norm{\nabla e_z}_T^2} + c_3  h_T^2 \alpha^{-1} \qp{\alpha \norm{e_u}_T^2 + \norm{e_z}_T^2}, \label{eq:Rlowbound}
    \\
    J_u^2 + J_z^2 &\leq c_5 h_E^2 \Norm{d-d_h}_{\widehat{T}}^2 +  c_4 \qp{\alpha \Norm{\nabla e_u}_{\widehat{T}}^2 + \Norm{\nabla e_z}_{\widehat{T}}^2} +  c_5  h_E^2 \alpha^{-1} \qp{\alpha \Norm{e_u}_T^2 + \Norm{e_z}_{\widehat{T}}^2}, 
    \label{eq:Jlowbound} \\
    \eta_{\alpha, T}
  & \leq \qp{\frac{1}{\alpha}-1}\Norm{\alpha^{\frac{1}{2}}\nabla e_u}_T + \alpha^{-\frac{1}{2}} \Norm{\alpha_{h}^{-\frac{1}{2}}(\alpha_h - \alpha) \nabla u}_T.\label{eq:alphalowbound}
\end{align}
where $c_1, \dots, c_5 \in \R$ are strictly positive constants independent of $h, \alpha, \alpha_h$.
\end{theorem}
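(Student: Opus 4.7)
The plan is to follow the classical Verf\"urth bubble-function methodology, adapted to the $\alpha$-weighted primal-dual structure. First I would derive, from the continuous problem \eqref{eq:bform} and elementwise integration by parts on the two gradient terms, a residual identity valid for any $(\varphi, \psi) \in X$: introducing an arbitrary $d_h \in V_h$ and writing $R_u^h := \alpha \Delta \fem u + \fem z$, $R_z^h := d_h + \Delta \fem z - \fem u$, one obtains
\begin{equation*}
  b\big((e_u, e_z), (\varphi, \psi)\big) = \sum_T \duality{R_u^h}{\varphi}_T + \sum_T \duality{R_z^h + (d - d_h)}{\psi}_T - \sum_E \duality{\alpha \jump{\fem u}}{\varphi}_E - \sum_E \duality{\jump{\fem z}}{\psi}_E.
\end{equation*}
This identity drives every subsequent estimate.

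For the interior residual bound \eqref{eq:Rlowbound} I would take the standard interior bubble $\psi_T \in \sobhO(T)$ and test the identity against $(R_u^h \psi_T, 0)$ and $(0, R_z^h \psi_T)$. Because $\psi_T$ vanishes on $\partial T$, the facet sums drop and the identity localises to a single element. Bubble-function norm equivalence $\Norm{p}_T^2 \lesssim \duality{p}{p\psi_T}_T$ for polynomial $p$, together with the inverse estimate $\Norm{\nabla(p\psi_T)}_T \lesssim h_T^{-1}\Norm{p}_T$, followed by Cauchy--Schwarz and division, yields $h_T \Norm{R_u^h}_T \lesssim \alpha\Norm{\nabla e_u}_T + h_T\Norm{e_z}_T$ and $h_T \Norm{R_z^h}_T \lesssim \Norm{\nabla e_z}_T + h_T\Norm{e_u}_T + h_T\Norm{d - d_h}_T$. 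Squaring, multiplying the first by $\alpha^{-1}$, and using the triangle inequality to pass from $R_z^h$ to the quantity $R_z = h_T\Norm{d + \Delta \fem z - \fem u}_T$ reproduces \eqref{eq:Rlowbound}.

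For the jump bound \eqref{eq:Jlowbound} I would work on the patch $\widehat T$ using an edge bubble $\chi_E$ and the standard lifting $\ell_E$ of a facet function to a polynomial on $\widehat T$. Testing against $(\ell_E(\jump{\fem u})\chi_E, 0)$ isolates the left-hand contribution $\duality{\alpha \jump{\fem u}}{\jump{\fem u}\chi_E}_E \sim \alpha\Norm{\jump{\fem u}}_E^2$, while the right-hand side contains $R_u^h$ on the two patch elements and the action of $b$ on the error. The scalings $\Norm{\ell_E p\, \chi_E}_{\widehat T} \lesssim h_E^{1/2}\Norm{p}_E$ and $\Norm{\nabla(\ell_E p\, \chi_E)}_{\widehat T} \lesssim h_E^{-1/2}\Norm{p}_E$, combined with the already-proven $R_u^h$ estimate substituted in on the patch, deliver the bound on $J_u^2$; the symmetric test function $(0, \ell_E(\jump{\fem z})\chi_E)$ handles $J_z$, with data oscillation arising naturally from the $d - d_h$ correction. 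Finally \eqref{eq:alphalowbound} is purely algebraic: decomposing $\nabla \fem u = \nabla u - \nabla e_u$ and applying the triangle inequality, the constraints $\alpha \leq \alpha_h \leq 1$ give $\alpha^{-1/2}|\alpha_h - \alpha| \leq (1 - \alpha)\alpha^{-1/2} = (\alpha^{-1} - 1)\alpha^{1/2}$ for the error piece, while the factorisation $\alpha^{-1/2}(\alpha_h - \alpha) = \alpha^{-1/2}\alpha_h^{1/2} \cdot \alpha_h^{-1/2}(\alpha_h - \alpha)$ with $\alpha_h^{1/2} \leq 1$ produces the second summand.

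The bubble-function mechanics are routine; the chief obstacle is bookkeeping the $\alpha$-weights so that the weighted combinations $\alpha\Norm{\nabla e_u}^2 + \Norm{\nabla e_z}^2$ and $h_T^2 \alpha^{-1}(\alpha\Norm{e_u}^2 + \Norm{e_z}^2)$ in \eqref{eq:Rlowbound}--\eqref{eq:Jlowbound} emerge with constants independent of $\alpha$. In particular, choosing $\ell_E(\jump{\fem u})\chi_E$ rather than $\ell_E(\alpha \jump{\fem u})\chi_E$ as the $u$-side jump test function is essential, and the same sensitivity recurs in the $\alpha_h/\alpha$ factorisation step used for the regularisation estimator.
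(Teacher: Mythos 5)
Your proposal is correct and follows essentially the same route as the paper: interior bubbles with the norm equivalence and inverse estimates for $R_u$, $R_z$ (after splitting off the data oscillation $d-d_h$), edge bubbles with liftings for the jumps reusing the interior estimate on the patch $\widehat{T}$, and the same triangle-inequality/factorisation algebra with $\alpha \leq \alpha_h \leq 1$ for $\eta_{\alpha,T}$. Organising the argument around an explicit residual identity rather than invoking the strong relations $\Delta z - u + d = 0$ and $\alpha\Delta u + z = 0$ directly is only a presentational difference.
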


\begin{remark}[A priori inconsistency vs a posteriori]
Notice that the a posteriori inconsistency term
  $\eta_{\alpha, T}$ is bounded above by its a priori analogue. This is inline with a posteriori studies for inconsistent methods, see for example \cite{vem}. 
\end{remark}

The proof of Theorem \ref{the:upper} and \ref{the:lower} requires the
following technical lemmata:

\begin{lemma}[Scott-Zhang Quasi-Interpolant {\cite[c.f. Theorem 4.1]{scott1990finite}}]  \label[lemma]{lem:clement}
We denote the Scott-Zhang quasi-interpolant by $I_h : \sobhO(\Omega) \to
V_h$. For all $T \in \mathcal{T}$ and $E \in \mathcal{E}$, there exist 
constants $C_{q}, C_{r} > 0$ such that the following error estimates
hold:
\begin{align}
    \norm{v - I_h v}_T &\leq C_q h_T \norm{\nabla v }_{\widetilde{T}}, \\ 
    \norm{v - I_h v}_E &\leq C_r h_E^{ \frac{1}{2}} \norm{\nabla v }_{\widehat{T}},
\end{align}
for all $v \in \sobh1(\widetilde{T})$. The interpolant also satisfies
the following stability property for constant $C_s > 0$ 
\begin{align}
    \|\nabla (I_h v)\|_T \leq C_s \|\nabla v\|_{\widetilde{T}}.
\end{align}
\end{lemma}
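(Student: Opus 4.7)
Since this is a standard result, my plan is to sketch the construction and indicate how each estimate follows. I would construct $I_h$ by assigning to every global Lagrange node $x_i$ a simplex or facet $\sigma_i$ containing $x_i$, choosing $\sigma_i \subset \partial\Omega$ whenever $x_i \in \partial\Omega$ so that $I_h$ preserves the homogeneous Dirichlet condition. On each $\sigma_i$ I would select a dual basis $\psi_i$ satisfying $\int_{\sigma_i} \psi_i \phi_j = \delta_{ij}$, where $\{\phi_j\}$ are the local nodal basis of $\mathcal{R}_k(\sigma_i)$, and set
\begin{equation*}
    I_h v \;=\; \sum_i \qp{\int_{\sigma_i} \psi_i \, v} \phi_i.
\end{equation*}
The use of $(n-1)$-dimensional $\sigma_i$, together with the trace theorem, makes this definition meaningful for $v \in \sobh{1}(\widetilde T)$ rather than only continuous $v$, which is essential here.

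The bounds then follow from three ingredients. First, the duality between $\psi_i$ and $\phi_j$ yields polynomial invariance: $I_h p = p$ for every $p \in \mathcal{R}_k(\widetilde{T})$ restricted to $T$. Second, an elementary scaling argument on the reference patch gives the local stabilities $\norm{I_h v}_T \leq C \norm{v}_{\widetilde T}$ and $\norm{\nabla I_h v}_T \leq C_s \norm{\nabla v}_{\widetilde T}$, both uniform in $h_T$ under shape regularity. Third, combining invariance with stability, for any $p \in \mathcal{R}_k(\widetilde T)$
\begin{equation*}
    \norm{v - I_h v}_T \;\leq\; \norm{v - p}_T + \norm{I_h(v-p)}_T \;\leq\; C \norm{v-p}_{\widetilde T},
\end{equation*}
so taking an infimum over $p$ and applying the Bramble--Hilbert lemma on the reference patch produces $\norm{v - I_h v}_T \leq C_q h_T \norm{\nabla v}_{\widetilde T}$. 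The facet estimate follows by composing the same argument with the scaled trace inequality $\norm{w}_E^2 \leq C \qp{ h_E^{-1}\norm{w}_T^2 + h_E \norm{\nabla w}_T^2 }$ applied to $w = v - I_h v$, which delivers the half-order scaling $h_E^{1/2}$ after absorbing the interior $L^2$ term into $\norm{\nabla v}_{\widehat T}$.

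The main obstacle is the boundary bookkeeping: the sets $\sigma_i$ attached to nodes on $\partial\Omega$ must be chosen consistently so that polynomial invariance still holds on every patch meeting $\partial\Omega$ while simultaneously guaranteeing $I_h v \in \sobhO(\Omega)$. Once the construction respects this compatibility condition, everything that follows is a reference-element exercise combining shape regularity, Bramble--Hilbert and a trace inequality, which is why we simply cite \cite{scott1990finite} rather than reproducing the full argument.
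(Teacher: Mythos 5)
Your sketch is correct and follows exactly the standard Scott--Zhang construction (facet-based dual-basis averaging, polynomial invariance plus local stability, Bramble--Hilbert on the reference patch, and a scaled trace inequality for the facet bound), which is precisely the argument of the reference the paper cites; the paper itself offers no proof beyond that citation. No gaps worth noting.
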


\begin{definition}[Lifting Opertator {\cite[page 427]{ern2004theory}}]
  Let $\mathcal{P}_{E,T}: \mathbb{P}_k(E) \to \mathbb{P}_k(T)$ denote the mapping from a facet $E$ to an element $T$. We define the lifting operator $\mathcal{P}_{E}$ from a facet $E$ to the two elements $T^{+}$ and $T^{-}$ sharing this facet to be 
  \begin{align} \label{eq:lifting}
    \forall \phi \in  \mathbb{P}_k(E), \mathcal{P}_{E}: = \begin{cases}
      \mathcal{P}_{E,T^{+}}(\phi) \,\, \text{ on }T^{+} \\
      \mathcal{P}_{E,T^{-}}(\phi) \,\, \text{ on }T^{-}.
    \end{cases} 
  \end{align}
  $\mathcal{P}_{E}$ is supported on $\widehat{T}.$
\end{definition}

\begin{lemma}[Local Cut-Off Functions {\cite[Proposition 1.4]{verfurth2013posteriori}, \cite[Lemma 10.6, Lemma 10.8]{ern2004theory}}] \label{lem:bubble}
Let $\beta_T \in \sobhO(T)$ and $\beta_E \in \sobhO(E)$ denote local cut-off functions (commonly referred to as `bubble' functions) on an element $T$ and facet $E$, respectively. These functions have the following properties:
\begin{align*}
    \text{\emph{supp}}\, \beta_T &= T, & 0 \leq \beta_T &\leq 1, & \norm{\beta_T}_\infty &= 1, \\
    \text{\emph{supp}}\, \beta_E &= \widehat{T}, & 0 \leq \beta_E &\leq 1, & \norm{\beta_E}_\infty &= 1.
\end{align*}
In addition, the following inequalities and inverse estimates hold for all $T\in \mathcal{T}$, $E\in \mathcal{E}$, $\phi \in \mathbb{P}_{m}(T)$ and $\psi \in \mathbb{P}_{n}(T)$
:
\begin{align} \label{eq:bubble1} 
    \norm{\phi}_T &\leq C_{\beta,1} \norm{\beta_T^{\frac{1}{2}} \phi}_T, \\ \label{eq:bubble2} 
    \norm{\nabla \qp{\beta_T \phi}}_T &\leq C_{\beta,2} h_T^{-1} \norm{\phi}_T,  \\ \label{eq:bubble3} 
    \norm{\psi}_E &\leq C_{\beta,3} \norm{\beta_E^{\frac{1}{2}} \psi}_E, \\ \label{eq:bubble4} 
    \norm{\beta_E \mathcal{P}_{E}(\psi)}_{\widehat{T}}&\leq C_{\beta,4} h_E^{\frac{1}{2}} \norm{\psi}_E,
    \\ \label{eq:bubble5} 
    \norm{\nabla \qp{\beta_E \mathcal{P}_{E}(\psi)}}_{\widehat{T}} &\leq C_{\beta,5} h_E^{-\frac{1}{2}} \norm{\psi}_E,  
\end{align}
where the constants $C_{\beta,1}, \dots, C_{\beta,5}$ are mesh-size independent and $m,n\geq 1$ are integers.
\end{lemma}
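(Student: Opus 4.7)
The plan is to establish every property by reducing to a reference configuration, invoking finite-dimensional norm equivalence there, and transferring back to the physical element via affine scaling under shape regularity. I would organise the proof in four steps.

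First, I would construct the bubble functions explicitly. On a reference simplex $K$ with barycentric coordinates $\lambda_0,\dots,\lambda_n$, define $\hat\beta_K := c_n\prod_{i=0}^n \lambda_i$, normalised so that $\|\hat\beta_K\|_\infty = 1$. For the reference facet $e$ of $K$ opposite the vertex $v_j$, define $\hat\beta_e := c_{n-1}\prod_{i\neq j}\lambda_i$, again normalised. The physical bubbles $\beta_T$ and $\beta_E$ are then obtained by composition with the affine map $F_T:K\to T$ (applied piecewise on each of $T^{\pm}$ for $\beta_E$). Support, non-negativity, and $L^\infty$ bounds follow directly from this construction; $\beta_E\in\sobhO(\widehat{T})$ because it vanishes on $\partial\widehat{T}$. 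Tensor-product bubbles on the reference cube handle the quadrilateral/hexahedral case analogously.

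Second, for \eqref{eq:bubble1} and \eqref{eq:bubble3}, I would observe that on the finite-dimensional space $\mathbb{P}_m(K)$ the map $\phi\mapsto \|\hat\beta_K^{1/2}\phi\|_K$ defines a norm, because $\hat\beta_K$ is strictly positive in the interior of $K$ and a polynomial vanishing on an open set is identically zero. By equivalence of norms in finite dimensions, it is equivalent to $\phi\mapsto \|\phi\|_K$. Pulling back through $F_T$ introduces identical Jacobian factors on both sides, so shape regularity gives a mesh-independent constant $C_{\beta,1}$. An identical argument on the reference facet gives $C_{\beta,3}$.

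Third, for the inverse estimate \eqref{eq:bubble2} I would apply the same template: on $\mathbb{P}_m(K)$ the map $\phi\mapsto \|\nabla(\hat\beta_K\phi)\|_K$ is a norm (if it vanishes then $\hat\beta_K\phi$ is constant, but it also vanishes on $\partial K$, forcing $\phi\equiv 0$), and is therefore equivalent to $\|\phi\|_K$. Transferring to $T$, the chain rule produces a factor of $h_T^{-1}$ from the gradient while the $L^2$ Jacobian factors cancel, delivering the stated scaling. The facet-based estimates \eqref{eq:bubble4}--\eqref{eq:bubble5} follow the same recipe applied on each of the two reference elements adjacent to $e$: the maps $\psi\mapsto\|\hat\beta_e\,\mathcal{P}_{\hat e}(\psi)\|_K$ and $\psi\mapsto\|\nabla(\hat\beta_e\,\mathcal{P}_{\hat e}(\psi))\|_K$ are norms on $\mathbb{P}_n(e)$, hence equivalent to $\psi\mapsto\|\psi\|_e$. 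Scaling back to $\widehat T$ picks up $h_E^{1/2}$ in \eqref{eq:bubble4} from comparing the $(n-1)$-dimensional measure of $E$ with the $n$-dimensional measure of $T^\pm$, and an additional $h_E^{-1}$ from the gradient in \eqref{eq:bubble5}.

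The main technical obstacle is bookkeeping these scaling exponents correctly in the facet-to-element pullback, and verifying that the lifting $\mathcal{P}_E$ is uniformly stable under shape regularity so that $C_{\beta,4},C_{\beta,5}$ do not degenerate on anisotropic meshes. Everything else reduces to compactness of the unit sphere in a finite-dimensional normed space, which is what ultimately powers the norm-equivalence step on $K$.
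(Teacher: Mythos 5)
Your argument is correct, but note that the paper does not prove this lemma at all: it is quoted as a standard result from Verf\"urth and Ern--Guermond, and the construction you give (barycentric-coordinate bubbles on a reference element, norm equivalence on the finite-dimensional polynomial spaces, then affine scaling under shape regularity, with the extra $h_E^{\pm 1/2}$ factors coming from comparing facet and element measures) is precisely the proof found in those references. The only caveats are cosmetic: shape regularity is assumed throughout the paper, so your concern about anisotropic degeneration of $C_{\beta,4},C_{\beta,5}$ does not arise here, and your reading of $\beta_E$ as an element of $\sobhO(\widehat{T})$ (rather than the paper's $\sobhO(E)$) is the correct interpretation of the statement.
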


\subsection*{Proof of Theorem \ref{the:upper}}
We consider the error in the energy norm $\|\cdot\|_X$. Using the
definition of the energy norm \eqref{eq:xnorm} and the variational
problem defined by \eqref{eq:b}-\eqref{eq:bform}, we obtain,
\begin{align*}
  \|\vec e\|_X^2 & = \duality{d}{e_z} - \duality{\alpha  \nabla \fem{u}}{\nabla e_u} +  \duality{\fem{z}}{e_u} - \duality{\nabla \fem{z}}{\nabla e_z} - \duality{\fem{u} }{e_z}.
\end{align*}
Integrating by parts elementwise yields
\begin{equation*}
  \begin{split}
    \|\vec{e}\|_X^2
    =
    \sum_{T\in \mathcal{T}} &\Big (\duality{d + \Delta \fem{z} - \fem{u} }{e_z}_T
    +
    \duality{\alpha\Delta\fem{u} + \fem{z}}{e_u}_T \Big )
    \\  & 
    +
    \sum_{E\in \mathcal{E}}  \Big (\duality{\alpha  \jump{\fem{u}}}{e_u}_E
    +
    \duality{ \jump{\fem{z}}}{e_z}_E \Big ),
  \end{split}
\end{equation*}
where $[\![ \cdot ]\!]$ denotes the jump over a facet $E\in
\mathcal{E}$ as defined in previously. Applying the orthogonality condition from
Theorem \ref{thm:orthogonality} to the above expression we obtain
\begin{equation*}
  \begin{split}
    \|\vec{e}\|_X^2
    =
    &\sum_{T\in \mathcal{T}} \Big (
    \duality{d + \Delta \fem{z} - \fem{u}}{e_z - I_h e_z}_T
    +
    \duality{\alpha\Delta\fem{u} + \fem{z}}{e_u- I_h e_u}_T 
    \\
    &+
    \duality{(\alpha_h - \alpha)\nabla \fem{u}}{\nabla I_h e_u}_T   \Big )
    +
    \sum_{E\in \mathcal{E}} \Big (
    \duality{\alpha  \jump{\fem{u}}}{e_u- I_h e_{u}}_E
    \\
    &+
    \duality{ \jump{\fem{z}}}{e_z- I_h e_{z}}_E \Big ). 
\end{split}
\end{equation*}
Applying the Cauchy-Schwarz inequality results in,
\begin{equation*}
\begin{split}
  \|\vec{e}\|_X^2
  \leq
  \sum_{T\in \mathcal{T}}
  & \Big ( \norm{d + \Delta \fem{z} - \fem{u}}_T \norm{e_z - I_h e_{z}}_T
  +
  \norm{\alpha\Delta\fem{u} + \fem{z}}_T \norm{e_u- I_h e_{u}}_T
  \\ &
  +
  \norm{(\alpha_h - \alpha)\nabla \fem{u} }_T \norm{\nabla I_h e_{u}}_T \Big )
  +
  \sum_{E\in \mathcal{E}} \Big (
  \norm{\alpha \jump{\fem{u}}}_E
  \norm{e_u- I_h e_{u}}_E
  \\&+
  \norm{\jump{\fem{z}}}_E \norm{e_z- I_h e_{z}}_E \Big ). 
\end{split}
\end{equation*}
Now, using the error and stability properties from Lemma \ref{lem:clement} we obtain,
\begin{equation*}
  \begin{split}
     \|\vec{e}\|_X^2 \leq C_I \Bigg ( \sum_{T\in \mathcal{T}} & \Big ( h_T \norm{d + \Delta \fem{z} - \fem{u}}_T \|\nabla e_z \|_{\widetilde{T}} + h_T\norm{\alpha\Delta\fem{u} + \fem{z}}_T \|\nabla e_u \|_{\widetilde{T}} \\ &
     + \norm{(\alpha_h - \alpha)\nabla \fem{u} }_T \norm{\nabla e_{u}}_{\widetilde{T}} \Big )
     + \sum_{E\in \mathcal{E}} \Big ( h_E^{\frac{1}{2}} \norm{\alpha \jump{\fem{u}}}_E \|\nabla e_u \|_{\widehat{T}} \\&+ h_E^{\frac{1}{2}}\norm{\jump{\fem{z}}}_E \|\nabla e_z \|_{\widehat{T}}\Big ) \Bigg ),
\end{split}
\end{equation*}
where $C_I:= \max(C_q, C_r, C_s)$.  Applying the Cauchy-Schwarz
inequality for finite sums twice
yields,
\begin{equation*}
\begin{split}
    \|\vec{e}\|_X^2 &\leq CC_I \Bigg (\sum_{T\in \mathcal{T}} h_T^2 \norm{d + \Delta \fem{z} - \fem{u}}_T^2 + h_T^2 \alpha^{-1} \norm{\alpha\Delta\fem{u} + \fem{z}}_T^2 + \alpha^{-1} \norm{(\alpha_h - \alpha)\nabla \fem{u} }_T^2  
    \\& \,\,\,+ \frac{1}{2} \bigg (\sum_{E\in \mathcal{T}} h_E \alpha^{-1}
    \norm{\alpha \jump{\fem{u}}}_E^2 + h_E  \norm{\jump{\fem{z}}}_E^2 \bigg )
    \Bigg )^{\frac{1}{2}} \Bigg ( \|\nabla e_z \|^2 + \alpha  \|\nabla e_u \|^2
    \Bigg )^{\frac{1}{2}},
\end{split}
\end{equation*}
where $C$ is a constant that takes into account the repeated counting
of elements on a patch. Therefore, with $\widetilde{C} := C\max(C_q,
C_r, C_s)$, we have the upper bound,
\begin{align*}
    \|\vec{e}\|_X \leq  \widetilde{C} \Bigg ( \sum_{T\in \mathcal{T}} \eta_{h, T}^2 + \eta_{\alpha, T}^2   \Bigg)^{\frac{1}{2}},
\end{align*}
as required. \qed

\subsection*{Proof of Theorem \ref{the:lower}}

We split this proof into three components: the interior component of
$\eta_{h,T}^2$, the facet component of $\eta_{h,T}^2$, and the
regularisation estimator $\eta_{\alpha, T}^2$.

\paragraph{1. The Interior Component of $\eta_{h,T}^2$: $R_u^2 + R_z^2$}

We introduce an arbitrary function $d_h \in V_h$. 
Expanding $R_z$ using the triangle inequality,
\begin{align*}
    \norm{d + \Delta \fem{z} - \fem{u}}_T\leq \norm{d_h + \Delta \fem{z} - \fem{u}}_T + \norm{d-d_h}_T. 
\end{align*}
Let $r_{zh} := d_h + \Delta \fem{z} - \fem{u}$. Introducing the bubble function $\beta_T$ for an element $T$ and using \eqref{eq:bubble1}, we obtain
\begin{align*}
    \norm{r_{zh}}_T^2 
    &\leq C_{\beta,1}^2 \int_T (d_h + \Delta \fem{z} - \fem{u}) \beta_T r_{zh}.
\end{align*}
Recalling that $\Delta z - u +d =0$ and using integration by parts yields,
\begin{align*}
    \norm{r_{zh}}_T^2 
    &\leq C_{\beta,1}^2 \qp{ \int_T \qp{(d_h - d) + (u -\fem{u})} \beta_T r_{zh} + \nabla (z- \fem{z}) \cdot \nabla (\beta_T r_{zh}) }.
\end{align*}
Applying the Cauchy-Schwarz inequality, the inverse estimate given in \eqref{eq:bubble2} and bounding by $\norm{\beta_T}_\infty= 1$ results in,
\begin{align*}
    \norm{r_{zh}}_T 
    &\leq C_{\beta,1}^2 \bigg (\norm{d - d_h}_T + \norm{u -\fem{u}}_T + C_{\beta,2}h_T^{-1}\norm{\nabla (z- \fem{z})}_T\bigg ).
\end{align*}
Therefore,
\begin{align} \label{eq:zp1}
    \norm{d + \Delta \fem{z} - \fem{u}}_T\leq C_{\beta,1}^2 \bigg (\norm{u -\fem{u}}_T + C_{\beta,2}h_T^{-1}\norm{\nabla (z- \fem{z})}_T\bigg ) + (1+ C_{\beta,1}^2)\norm{d-d_h}_T. 
\end{align}
Squaring this expression, making use of the Cauchy-Schwarz inequality for finite sums, and multiplying by $h_T^2$
\begin{align} \label{eq:zp2}
    R_z^2 
    &\leq 3C_{\beta,1}^4 \bigg (h_T^2\norm{u -\fem{u}}_T^2 + C_{\beta,2}^2\norm{\nabla (z- \fem{z})}_T^2\bigg ) + 3(1+C_{\beta,1}^2)^2 h_T^2 \norm{d-d_h}_T^2. 
\end{align}
Using the same arguments as above for $R_u^2$ yields the inequalities,
\begin{align} \label{eq:up1}
    \norm{\alpha \Delta \fem{u}+ \fem{z}}_T 
    &\leq C_{\beta,1}^2 \Bigg (\norm{z -\fem{z}}_T + C_{\beta,2}h_T^{-1}\alpha \norm{\nabla (u- \fem{u})}_T\bigg ),
\end{align}
and 
\begin{align} \label{eq:up2}
    R_u^2
    &\leq 2C_{\beta,1}^4 \Bigg ( \alpha^{-1} h_T^2 \norm{z -\fem{z}}_T^2 + C_{\beta,2}^2 \alpha \norm{\nabla (u- \fem{u})}_T^2\bigg ).
\end{align}
Combining \eqref{eq:zp2} and \eqref{eq:up2}, we obtain the following bound,
\begin{equation*}
\begin{split}
        R_z^2 + R_u^2  \leq & \, 3\qp{1+ C_{\beta,1}^2}^2 h_T^2 \norm{d-d_h}_T^2  \\&+3C_{\beta,1}^4 \qp{ C_{\beta,2}^2 \qp{\alpha \Norm{\nabla e_u}_T^2 + \Norm{\nabla e_z}_T^2} + \alpha^{-1} h_T^2 \qp{\alpha \norm{e_u}_T^2 + \norm{e_z}_T^2}}.
\end{split}
\end{equation*}
\paragraph{2. The Facet Component of $\eta_{h,T}^2$: $J_u^2 + J_z^2$}
We now bound $\norm{\jump{\fem{z}}}_E$. We introduce a bubble function on a facet $E$. Using \eqref{eq:bubble3}, 
\begin{equation*}
    \norm{\jump{\fem{z}}}_E^2 \leq C_{\beta,3}^2 \norm{\beta_E^{\frac{1}{2}}\jump{\fem{z}}}_E^2.
\end{equation*} 
We note that as $z \in \sobh{2}(\Omega)$ this implies that $\jump{z} |_E = 0$. Using this fact and introducing the lifting operator defined in \eqref{eq:lifting}, 
\begin{align*}
    \norm{\jump{\fem{z}}}_E^2 \leq C_{\beta,3}^2 \int_E \jump{(\fem{z}-z)} \beta_E \mathcal{P}_{E} \qp{\jump{\fem{z}}}.
\end{align*}
As $\beta_E \mathcal{P}_{E}\qp{\jump{\fem{z}}}$ is only supported on $\widehat{T}$ and is continuous across $E$, we obtain use integration by parts to obtain the following
\begin{align*}
    C_{\beta,3}^{-2}\norm{\jump{\fem{z}}}_E^2 \leq \int_{\widehat{T}} \nabla \qp{z-z_h} \cdot \nabla \qp{\beta_E \mathcal{P}_{E}\qp{\jump{\fem{z}}}} + \int_{\widehat{T}} \beta_E \mathcal{P}_{E}\qp{\jump{\fem{z}}} \Delta (z-z_h)
\end{align*}
Applying the Cauchy-Schwartz inequality yields
\begin{equation*}
    \begin{split}
    C_{\beta,3}^{-2}\norm{\jump{\fem{z}}}_E^2 \leq &\Norm{\nabla \qp{z-z_h}}_{\widehat{T}} \Norm{\nabla \qp{\beta_E \mathcal{P}_{E}\qp{\jump{\fem{z}}}}}_{\widehat{T}} \\ &+ \Norm{\beta_E \mathcal{P}_{E}\qp{\jump{\fem{z}}} }_{\widehat{T}} \Norm{\Delta (z-z_h)}_{\widehat{T}}.
    \end{split}
\end{equation*}
Applying \eqref{eq:bubble4} and \eqref{eq:bubble5} yields 
\begin{equation*}
    \begin{split}
    C_{\beta,3}^{-2}\norm{\jump{\fem{z}}}_E^2 \leq C_{\beta,5} h_{E}^{-\frac{1}{2}}\Norm{\nabla \qp{z-z_h}}_{\widehat{T}} \Norm{\jump{\fem{z}}}_{E}  + C_{\beta, 4} h_{E}^{\frac{1}{2}} \Norm{\Delta (z-z_h)}_{\widehat{T}} \Norm{\jump{\fem{z}}}_{E}.
    \end{split}
\end{equation*}
Recalling that $\Delta z - u +d =0$ and using the triangle inequality produces
\begin{equation*}
    \begin{split}
    C_{\beta,3}^{-2}\norm{\jump{\fem{z}}}_E \leq C_{\beta,5} h_{E}^{-\frac{1}{2}}\Norm{\nabla \qp{z-z_h}}_{\widehat{T}}   + C_{\beta, 4} h_{E}^{\frac{1}{2}} \qp{\Norm{\Delta\fem{z}- u_h + d}_{\widehat{T}} + \Norm{u-u_h}_{\widehat{T}} }.
    \end{split}
\end{equation*}
Using the estimate derived in \eqref{eq:zp1}, we find that 
\begin{equation}
    \begin{split}
    \norm{\jump{\fem{z}}}_E \leq &C_{\beta,3}^{2}\qp{C_{\beta,5} + C_{\beta, 4}C_{\beta,1}^2 C_{\beta,2}}h_{E}^{- \frac{1}{2}} \Norm{\nabla e_z}_{\widehat{T}} \\ &+ C_{\beta,3}^{2} C_{\beta,4}\qp{1+C_{\beta,1}^2} h_{E}^{\frac{1}{2}} \Norm{e_u}_{\hat{T}} +  C_{\beta,3}^{2} C_{\beta,4}\qp{1+C_{\beta,1}^2} h_{E}^{\frac{1}{2}} \Norm{d-d_h}_{\hat{T}}.
    \end{split}
    \label{eq:exp1}
\end{equation}
Using the same arguments for $\norm{\alpha\jump{\fem{u}}}_E$ yields
\begin{equation}
    \begin{split}
    \norm{\alpha \jump{\fem{u}}}_E \leq &\, C_{\beta,3}^{2}\qp{C_{\beta,5} + C_{\beta, 4}C_{\beta,1}^2 C_{\beta,2}}h_{E}^{- \frac{1}{2}} \alpha \Norm{\nabla e_u}_{\widehat{T}} + C_{\beta,3}^{2} C_{\beta,4}\qp{1+C_{\beta,1}^2} h_{E}^{\frac{1}{2}} \Norm{e_z}_{\widehat{T}}.
    \end{split}
    \label{eq:exp2}
\end{equation}
Combining \eqref{eq:exp1} and \eqref{eq:exp2}, we obtain the bound
\begin{equation*}
\begin{split}
    J_u^2 + J_z^2 \leq &\, 3  C_{\beta,3}^{4}\qp{C_{\beta,5} + C_{\beta, 4}C_{\beta,1}^2 C_{\beta,2}}^2 \qp{\alpha \Norm{\nabla e_u}_{\widehat{T}}^2 + \Norm{\nabla e_z}_{\widehat{T}}^2} \\
    &+ 3  C_{\beta,3}^{4} C_{\beta,4}^2\qp{1+ C_{\beta,1}^2}^2 h_E^2 \alpha^{-1} \qp{\alpha \Norm{e_u}_T^2 + \Norm{e_z}_{\widehat{T}}^2} \\
    &+ 3  C_{\beta,3}^{4} C_{\beta,4}^2\qp{1+ C_{\beta,1}^2}^2 h_E^2 \Norm{d-d_h}_{\widehat{T}}^2.
\end{split}
\end{equation*}

\paragraph{3. The regularisation estimator $\eta_{\alpha,T}^2$} 
We expand $\eta_{\alpha,T}$ using the triangle inequality, adding and subtracting $\alpha^{-\frac{1}{2}}(\alpha_h - \alpha) \nabla u$ in the norm,
\begin{equation*}
  \norm{\alpha^{-\frac{1}{2}} (\alpha_h - \alpha)\nabla \fem{u} }_T
  \leq \Norm{\qp{\frac{\alpha_h}{\alpha}-1} \alpha^{\frac{1}{2}}\nabla e_u}_T + \Norm{\qp{\frac{\alpha_h}{\alpha}}^{\frac{1}{2}} \alpha_{h}^{-\frac{1}{2}}(\alpha_h - \alpha) \nabla u}_T.
\end{equation*}
Recall that $\alpha \leq \alpha_h \leq 1$.
Therefore, we have the bound
\begin{equation*}
  \norm{\alpha^{-\frac{1}{2}} (\alpha_h - \alpha)\nabla \fem{u} }_T
  \leq \qp{\frac{1}{\alpha}-1}\Norm{\alpha^{\frac{1}{2}}\nabla e_u}_T + \alpha^{-\frac{1}{2}} \Norm{\alpha_{h}^{-\frac{1}{2}}(\alpha_h - \alpha) \nabla u}_T.
\end{equation*}
We note that the second term is the natural relative data oscillation
of the regularisation parameter.

Finally, defining the following group of constants,
  \begin{align*}
    c_1 &= 3\qp{1+ C_{\beta,1}^2}^2, & c_2 &= 3C_{\beta,1}^4 C_{\beta,2}^2, & c_3 &= 3C_{\beta,1}^4, \\ c_4 &=3  C_{\beta,3}^{4}\qp{C_{\beta,5} + C_{\beta, 4}C_{\beta,1}^2 C_{\beta,2}}^2  ,  & c_5 &= C_{\beta,3}^{4} C_{\beta,4}^2 c_1,
\end{align*}
we arrive at the bounds stated in \eqref{eq:Rlowbound}-\eqref{eq:alphalowbound}
as required. \qed

\section{Numerical Results}  \label{sec:results}
We draw inspiration from adaptive finite element methods
\cite{bonito2024adaptive} to develop our numerical scheme. These
methods typically employ local a posteriori error estimators to
determine which elements require refinement. The process follows an
adaptive loop
\begin{equation*}
    \textsc{solve } \rightarrow \textsc{ estimate } \rightarrow
    \textsc{ mark } \rightarrow \textsc{ refine}.
\end{equation*}
First, the variational problem is solved using a finite element method
on a coarse mesh. Second, error estimators are evaluated on
each element. Third, the elements with the largest error estimators
are marked for refinement, based on a marking strategy. Common marking
strategies include D\"orfler marking, maximum strategy, and
equidistribution strategy \cite{nochetto2009theory}. Finally, the
marked elements are refined using a bisection method. This loop is
repeated until the desired resolution is achieved.

We apply a similar scheme for adaptively selecting the regularisation
parameter. Starting with a relatively large initial regularisation
parameter, $\alpha_h^0 \,\,\sim \mathcal{O}(1)\,$, we compute the estimator $\eta_{\alpha,T}$
\eqref{eq:upestimalpha} for each $T \in \mathcal{T}$. The
regularisation parameter $\alpha_h$ is then reduced for each element,
relative to the size of the estimator. 
In places where the estimator is large, $\alpha_h$ is reduced the most.
This loop continues until a desired error tolerance is achieved.

Algorithm \ref{alg:1} outlines the proposed
adaptive regularisation numerical scheme in conjunction with adaptive
mesh refinement. 
This algorithm balances the regularisation and discretisation errors. 
Two error tolerances $\texttt{tol}_\alpha \text{ and } \texttt{tol}_h$ are provided for the regularisation error and discretisation error respectively. The adaptive method is executed until both estimators are below their error tolerances. Once the error tolerance for one of the quantities is reached, we stop the refinement process for this quantity and continue refining the other until its convergence criteria is satisfied. 

We have chosen to use maximum strategy to mark the elements for adaptive mesh refinement. We exclude any elements where the inconsistency
error $\eta_\alpha$ is larger than the discretisation error $\eta_h$. 
As will be demonstrated in \autoref{sec:conv} Figure \ref{fig:1c}, the inconsistency error dominates over the discretisation error. Removing these elements from mesh refinement avoids refining the mesh 
unnecessarily in places where the error could have been decreased by reducing $\alpha_h$.

As this is an optimal control problem, we are naturally interested in finding the control variable.
The finite element approximation of the control variable, $f_h$, can be reconstructed in a post-processing step after the adaptive scheme has concluded. 
We choose to find $f_h$ in the space of piecewise-linear polynomials, however, other choices of finite element spaces could be used so long as $f_h \in \leb{2}(\Omega)$.
We introduce the global finite element space
\begin{equation*}
    W_h := \{ w_h \in C^0(\Omega) : w_h|_T \in \mathcal{R}_1(T) \; \forall \, T \in \mathcal{T} \}.
\end{equation*}
We discretise \eqref{eq:KKT3} and seek $f_h \in W_h$ such that
\begin{equation*}
   \duality{\alpha_h f_h - z_h}{\tau_h} = 0, \threespace \forall \tau_h \in W_h.
\end{equation*}
It is clear from this equation that the control is given by the $\alpha_h$-weighted $\leb{2}$-projection of $z_h / \alpha_h$ into the space $W_h$, or equally, an interpolation operator could be chosen.

\begin{algorithm}[h!]
\caption{Adaptive Finite Element Method with Regularisation Refinement}
\label{alg:1}
\textbf{Input: }$\alpha_h^0$, initial coarse mesh, regularisation reduction factor $\rho \, \in [0,1]$, mesh refinement factor $\theta \in [0,1]$, $\texttt{tol}_\alpha >0$, $\texttt{tol}_h >0$.
\begin{algorithmic}[1]
    \While{$\eta_\alpha \geq \texttt{tol}_\alpha$ \textbf{and} $\eta_h \geq \texttt{tol}_h$}
        \State \textbf{Solve:} Solve the discrete problem \eqref{eq:bhform} on the current mesh.
        \State \textbf{Estimate:} Compute the error estimators $\eta_h$ and $\eta_{\alpha}$.
        \State \textbf{Regularisation Refinement:}
        \If{$\eta_\alpha \geq \texttt{tol}_\alpha$}
            \For{each element $T \in \mathcal{T}$}
            \State Compute the relative size of the estimator:
            \begin{equation*}
                r_T = \frac{\eta_{\alpha, T} -  \eta_\alpha^{\min}}{\eta_\alpha^{\max} - \eta_\alpha^{\min}}
            \end{equation*}
        \State where $\eta_{\alpha}^{\max} = \max_{T\in \mathcal{T}} \, \eta_{\alpha,T}$ and $\eta_{\alpha}^{\min} = \min_{T\in \mathcal{T}} \, \eta_{\alpha,T}$.
        \EndFor
        \ElsIf{$\eta_\alpha^{\max} =  \eta_\alpha^{\min}$  \textbf{ or } $\eta_\alpha < \texttt{tol}_\alpha$}
        \State $r_T = 0$ for all $T \in \mathcal{T}$.
        \EndIf

        \State Reduce $\alpha_h$:
        \begin{equation*}
            \alpha_{h}^{\text{new}} |_T = \alpha \, + (\alpha_{h} |_T - \alpha)(1 + r_T (\rho-1))
        \end{equation*}
        
        \State \textbf{Mesh Refinement:} 
        \If{$\eta_h \geq \texttt{tol}_h$}
        \State \textbf{Mark:} Use maximum strategy to mark elements for $h$-refinement. 
        \State The marked elements are in the set
        \begin{equation*}
            M := \{ T \in \mathcal{T} \, |  \, \eta_{h,T} \geq \theta \eta_h^{\max} \text{ and } \eta_{h,T} \geq \eta_{\alpha,T}\} 
        \end{equation*}
        \State where $\eta_{h}^{\max} = \max_{T\in \mathcal{T}} \, \eta_{h,T}$. 
        \State \textbf{Refine} the marked elements $M$ using the newest vertex bisection method.
        \State Project $\alpha_h^{\text{new}}$ onto the new mesh.
        \EndIf
        \State Redefine $\alpha_h = \alpha_h^{\text{new}}$

    \EndWhile
    \State Reconstruct the control variable using the weighted $\leb{2}$-projection:
    \begin{equation*}
        f_h = \arg \min_{\tau_h \in W_h} \Norm{\alpha_h^{\frac{1}{2}}\qp{\tau_h - \frac{z_h}{\alpha_h}}}
    \end{equation*}
\end{algorithmic}
\end{algorithm}

We now construct two examples with manufactured solutions to test our
numerical algorithm.  For simplicity of presentation, we consider both
cases on a one-dimensional domain $\Omega = (0,1).$

\example[Smooth Target]{\label{gaussian} Let $K_1=500$ and define $y:=
  -K_1(x-0.5)^2$ and $K_2:= \qp{1 + 12\alpha K_1^2}^{-1}$.  We define
  the target by the following function,
  \begin{equation*}
    d(x) := K_2\qp{1 + \alpha 4K_1^2 \qp{4y^2 + 12y + 3}}\exp(y).
  \end{equation*}
  The solution to \eqref{eq:bform} with this target are given by
  \begin{align*}
    u(x) &= K_2\exp(y), &
    z(x) &=  \alpha 2K_1 \qp{2y - 1} K_2\exp(y)
  \end{align*}
  This example is chosen as the support of the solution is localised
  to the centre of the domain. Plots of the target and solutions are
  shown in \autoref{fig:gaussian}.}

\begin{figure}[h!]
  \centering
  \begin{subfigure}{0.32\textwidth}
    \includegraphics[width=\linewidth]{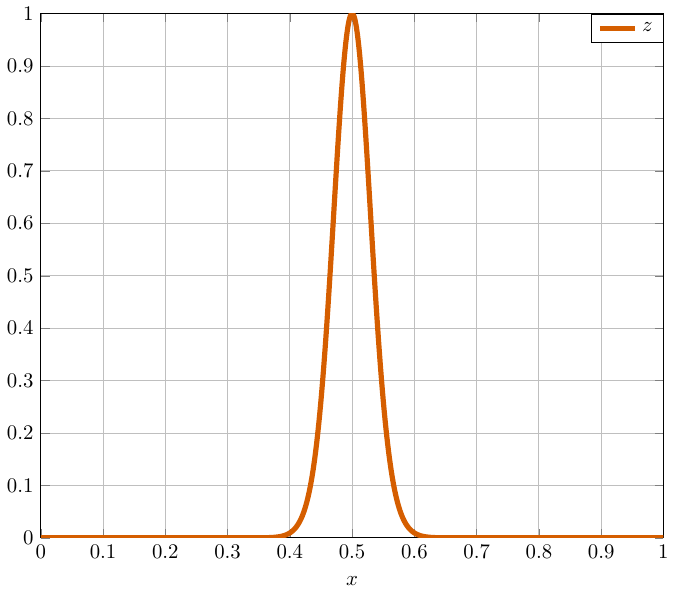}
    \caption{The target $d(x)$.}
    \label{fig:map}
  \end{subfigure}\hfil
  \begin{subfigure}{0.32\textwidth}
    \includegraphics[width=\linewidth]{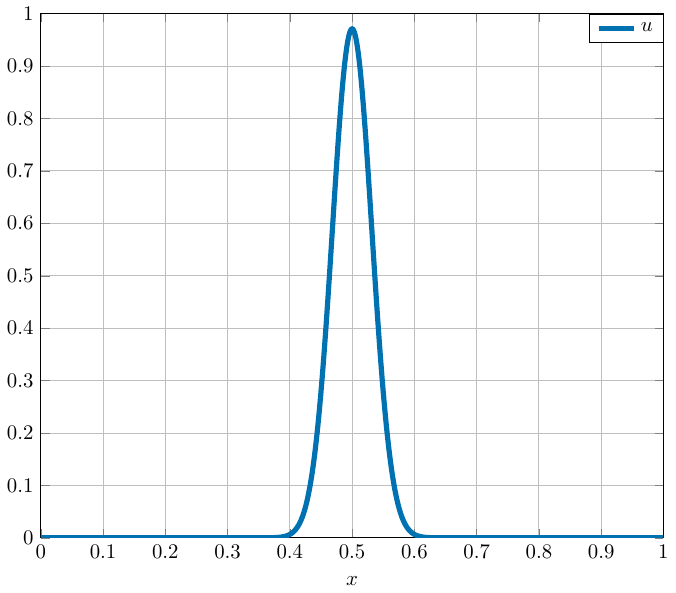}
    \caption{The primal solution $u(x)$.}
    \label{fig:pcd}
  \end{subfigure}\hfil
    \begin{subfigure}{0.32\textwidth}
    \includegraphics[width=\linewidth]{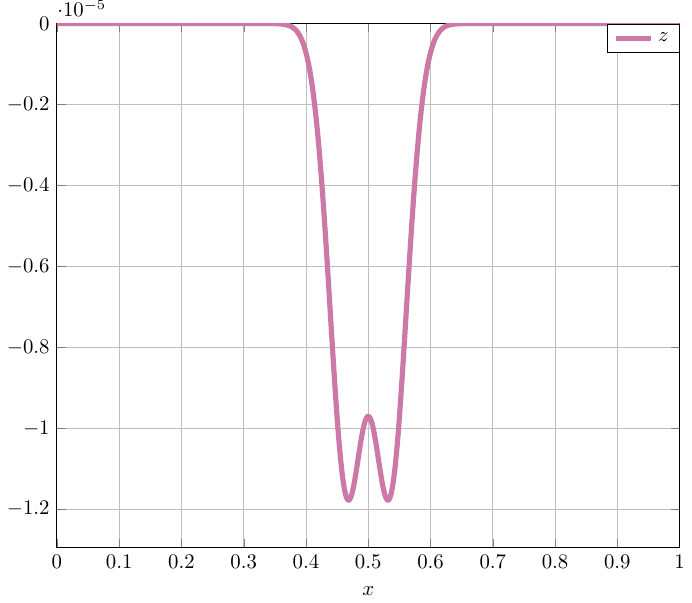}
    \caption{The adjoint solution $z(x)$.}
    \label{fig:pcd}
  \end{subfigure}\hfil
  \caption{Example \ref{gaussian}. Plots of the target, state, and
    adjoint variables with $\alpha = 10^{-8}$.}
  \label{fig:gaussian}
\end{figure}

\example[Boundary Layers \cite{MakridakisPimPryer:2024}]{
\label{bl}
Let $d(x) = 1$ and define $\omega := \qp{4\alpha}^{-1/4}$, $y=
\omega x$, and 
\begin{align*}
    K_3 &= \frac{\sinh(\omega)}{\cosh(\omega) + \cos(\omega)}, & K_4 &= \frac{\sin(\omega)}{\cosh(\omega) + \cos(\omega)}.
\end{align*}
We can write a closed form solution to this problem as
\begin{align*}
    u(x) &= 1 - \cosh(y) \cos(y) + K_3\sinh(y) \cos(y) - K_4 \cosh(y) \sin(y), 
    \\
    z(x) &=-\alpha^{\frac{1}{2}} \qp{\sinh(y) \sin(y) - K_3\cosh(y) \sin(y) - K_4\sinh(y) \cos(y)}.
\end{align*}
This problem is chosen as it induces boundary layers for small
$\alpha$. The solution is plotted in \autoref{fig:bl}.  }

\begin{figure}[h!]
  \centering
  \begin{subfigure}{0.35\textwidth}
    \includegraphics[width=\linewidth]{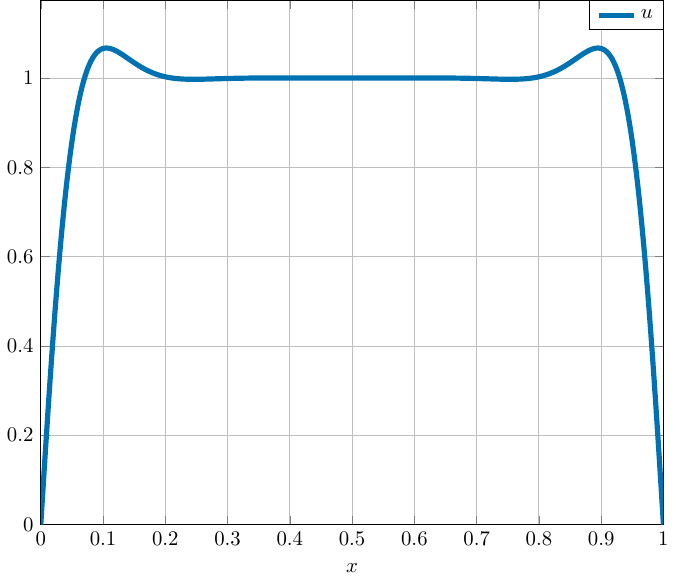}
    \caption{$u(x)$}
    \label{fig:map}
  \end{subfigure}\hfil
  \begin{subfigure}{0.35\textwidth}
    \includegraphics[width=\linewidth]{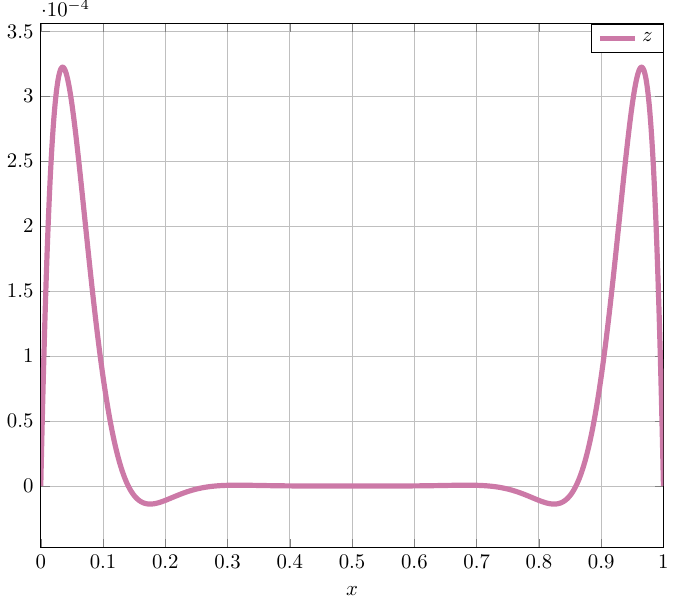}
    \caption{$z(x)$}
    \label{fig:pcd}
  \end{subfigure}\hfil
  \caption{Example \ref{bl}. Plots of state, and adjoint variables, $\alpha = 10^{-6}$.}
  \label{fig:bl}
\end{figure}

\subsection{Convergence Analysis} \label{sec:conv}
All numerical experiments were implemented using FEniCS \cite{AlnaesEtal2015, LoggEtal2012}. We begin by analysing the behaviour of the estimators introduced in
\autoref{the:upper}. Three tests are conducted using examples
\ref{gaussian} and \ref{bl}, with the results shown in Figure
\ref{fig:conv_2}. The first test
investigates the convergence of the mesh component of the estimator
$\eta_h$. The adaptive regularisation parameter is chosen such that
$\alpha_h = \alpha$, ensuring the regularisation estimator $\eta_\alpha$ is
zero. The results are shown in Figures \ref{fig:1a} and \ref{fig:2a}. Both the error and estimator are evaluated as the mesh is
uniformly refined. For both cases, the error and estimator converge
linearly, in agreement with established theory.

The second test computes the effectivity index $\eta_h  /
\|\vec{e}\|_X$ as $\alpha$ decreases. A fixed uniform mesh was used
with $\text{dim}V_h = 10^5$ and the adaptive regularisation parameter
was again chosen as $\alpha_h = \alpha$. The results are displayed in
Figures \ref{fig:1b} and \ref{fig:2b}. In both examples, the
effectivity remains constant (approximately 4.9) for all values of
$\alpha$ considered. We conclude that the estimator $\eta_h$ is robust
with respect to $\alpha$.

Finally, we examine the behaviour of the inconsistency term $\eta_\alpha$. The mesh was uniform with $\text{dim}V_h = 10^5$. The error, global regularisation and discretisation estimators, and the full estimate were evaluated as $\alpha_h$ was decreased uniformly to $\alpha$. The results can be seen in Figures 
\ref{fig:1c} and \ref{fig:2c}. 
All four quantities decrease as $\alpha_h$ is decreased. When $\alpha_h \neq \alpha$, the regularisation inconsistency estimate dominates over the discretisation estimate. Once $\alpha_h = \alpha$, the regularisation estimator vanishes, and the error is bounded by the discretisation estimate only, as is the case in Figures \ref{fig:1a} and \ref{fig:2a}. 

\begin{figure}[h!]
  \centering
  \begin{subfigure}[t]{0.32\textwidth}
    \includegraphics[width=\linewidth]{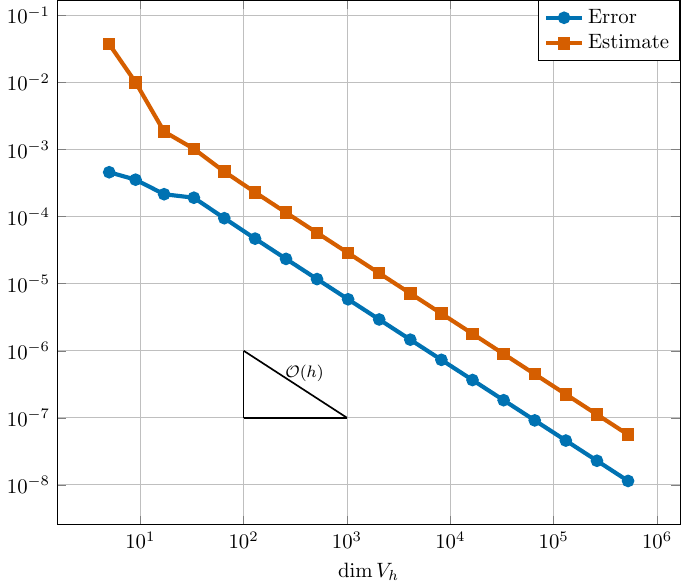}
    \caption{Example \ref{gaussian}: The error and estimator for fixed $\alpha_h = \alpha = 10^{-8}$.}
    \label{fig:1a}
  \end{subfigure}\hfil
  \begin{subfigure}[t]{0.32\textwidth}
    \includegraphics[width=\linewidth]{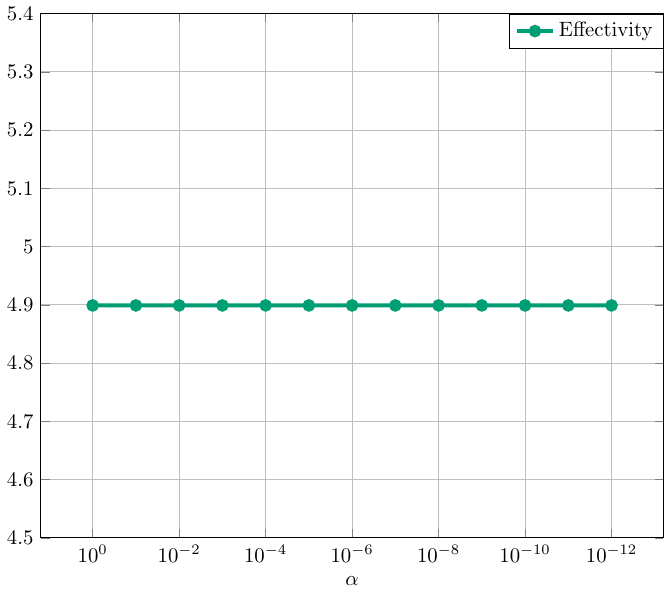}
    \caption{Example \ref{gaussian}: Effectivity index vs $\alpha$. Fixed mesh $\text{dim }V_h = 10^5$ and $\alpha_h = \alpha$. }
    \label{fig:1b}
  \end{subfigure}\hfil
    \begin{subfigure}[t]{0.32\textwidth}
    \includegraphics[width=\linewidth]{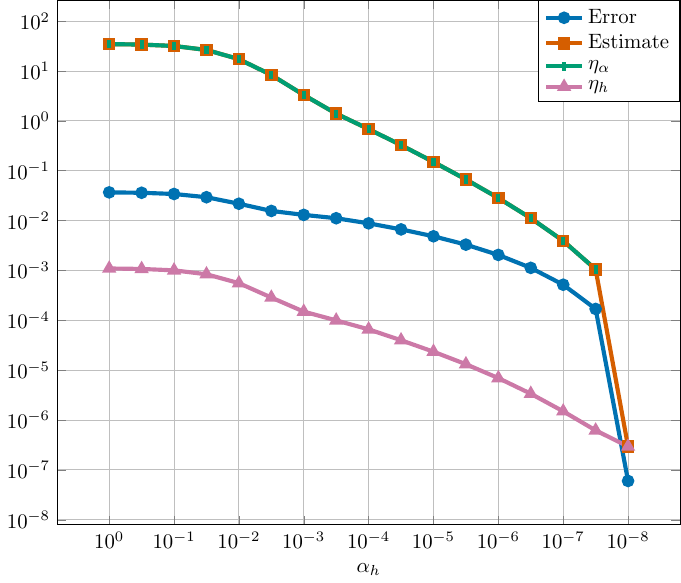}
    \caption{Example \ref{gaussian}: Error and estimator for fixed mesh, $\text{dim }V_h = 10^5, \alpha = 10^{-8}$.}
    \label{fig:1c}
  \end{subfigure}\hfil
  \begin{subfigure}[t]{0.32\textwidth}
    \includegraphics[width=\linewidth]{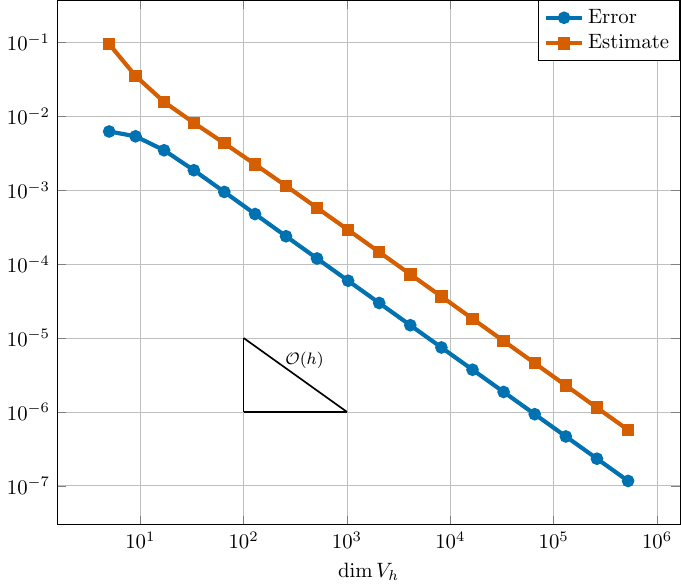}
    \caption{Example \ref{bl}: The error and estimator for fixed $\alpha_h = \alpha = 10^{-6}$.}
    \label{fig:2a}
  \end{subfigure}\hfil
  \begin{subfigure}[t]{0.32\textwidth}
    \includegraphics[width=\linewidth]{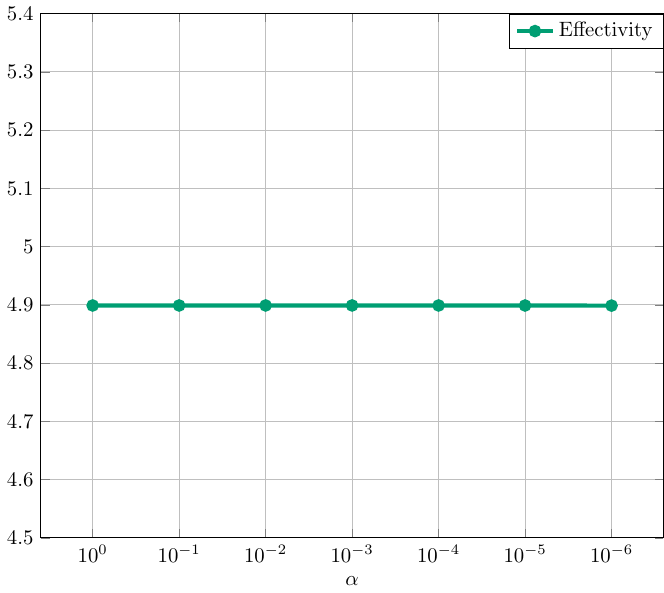}
    \caption{Example \ref{bl}: Effectivity index vs $\alpha$. Fixed mesh $\text{dim }V_h = 10^5$ and $\alpha_h = \alpha$. }
    \label{fig:2b}
  \end{subfigure}\hfil
    \begin{subfigure}[t]{0.32\textwidth}
    \includegraphics[width=\linewidth]{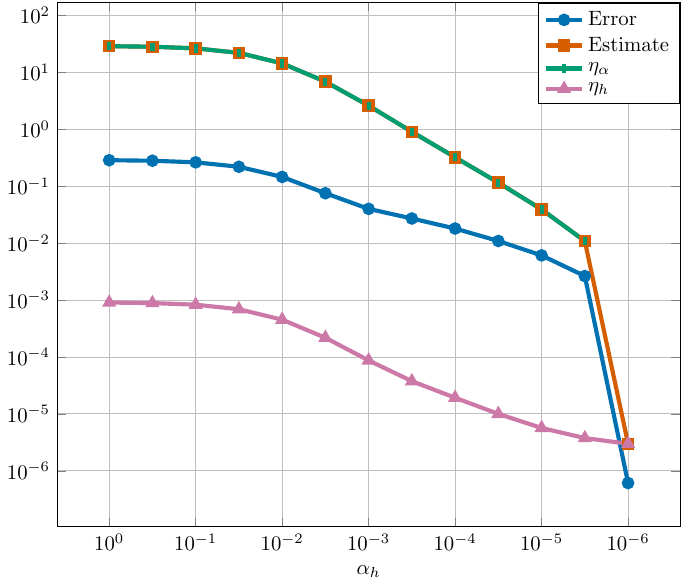}
    \caption{Example \ref{bl}: Error and estimator for fixed mesh, $\text{dim }V_h = 10^5, \alpha = 10^{-6}$.}
    \label{fig:2c}
  \end{subfigure}\hfil
  \caption{Convergence plots for \autoref{sec:conv} for examples \ref{gaussian} and \ref{bl}. The left column showcases the convergence of the mesh estimator $\eta_T$ as the mesh is uniformly refined with no adaptive regularisation. The results showcase that this estimator is robust in $h$. The middle column plots the effectivity index $\eta_h/\|\vec{e}\|_X$ for a fixed uniform mesh for different choices of $\alpha$. The effectivity is constant and therefore $\eta_h$ is robust in $\alpha$. The right column showcases the behaviour of the error and estimators on a fixed uniform mesh for the adaptive regularisation scheme as $\alpha_h$ was decreased uniformly to $\alpha$.}
  \label{fig:conv_2}
\end{figure}

\subsection{Regularisation Refinement Adaptive Scheme}
\label{sec:regadpt}

We examine the behaviour of Algorithm \ref{alg:1} without mesh
refinement, focusing solely on the adaptive regularisation component.
The mesh was uniform with $\text{dim}V_h = 10^4$. For both examples,
the regularisation reduction factor was set to $\rho = 0.5$, and an
initial value of $\alpha_h^0 = 1$ was used. A convergence criterion of
$\eta_\alpha < \texttt{tol}_{\alpha}$ was applied. The results for example
\ref{gaussian} are presented in Figures \ref{fig:gaussian_alphaonly}
and \ref{fig:gaussian_alphaonly_stills}. The regularisation parameter
was set to $\alpha = 10^{-8}$, with an error tolerance of
$\texttt{tol} = 10^{-8}$, and convergence was achieved after 201 iterations.

Plots of the adaptive regularisation parameter $\alpha_h$, the
regularisation estimator $\eta_{\alpha,T}$, and the approximate state
variable $u_h$ over the domain are shown in Figure
\ref{fig:gaussian_alphaonly_stills} for several iterations. Accompanying videos can also be found in the supplementary material.
From the
plots, it is clear that $\alpha_h$ tends to $\alpha$ primarily in regions with
large gradients, which is consistent with the structure of the
inconsistency term $\eta_{\alpha, T}$. In regions where the solution
is zero, $\alpha_h$ is chosen to be larger than $\alpha$, with
$\max \alpha_h \sim \mathcal{O}(10^{-4})$, almost four orders of magnitude
greater than $\alpha$. The corresponding convergence results are shown
in Figure \ref{fig:gaussian_alphaonly}. These plots showcase the behaviour of the error norm, estimator components, effectivity index, and the maximum, minimum, and mean average values of $\alpha_h$ per iteration.

\begin{figure}[h!]
  \centering
  \begin{subfigure}{0.35\textwidth}
    \includegraphics[width=\linewidth]{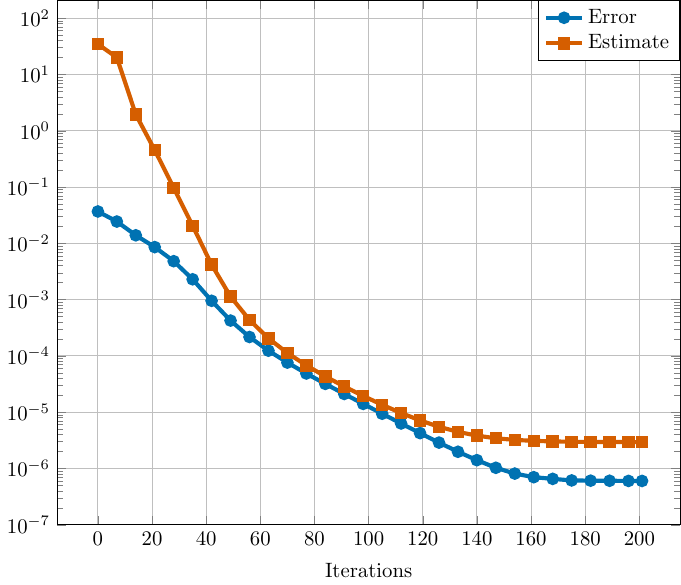}
    \caption{Error and Estimator}
    \label{fig:gaussian_alpha_error}
  \end{subfigure}\hfil
  \begin{subfigure}{0.35\textwidth}
    \includegraphics[width=\linewidth]{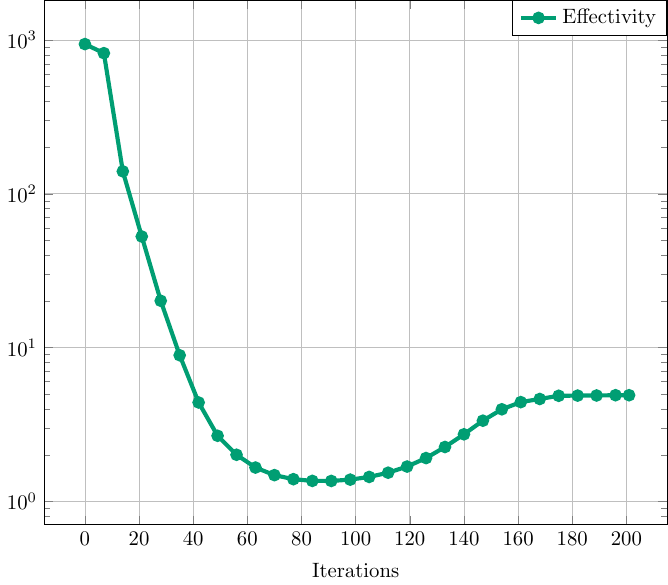}
    \caption{Effectivity}
    \label{fig:gaussian_alpha_ei}
  \end{subfigure}
    \begin{subfigure}{0.35\textwidth}
    \includegraphics[width=\linewidth]{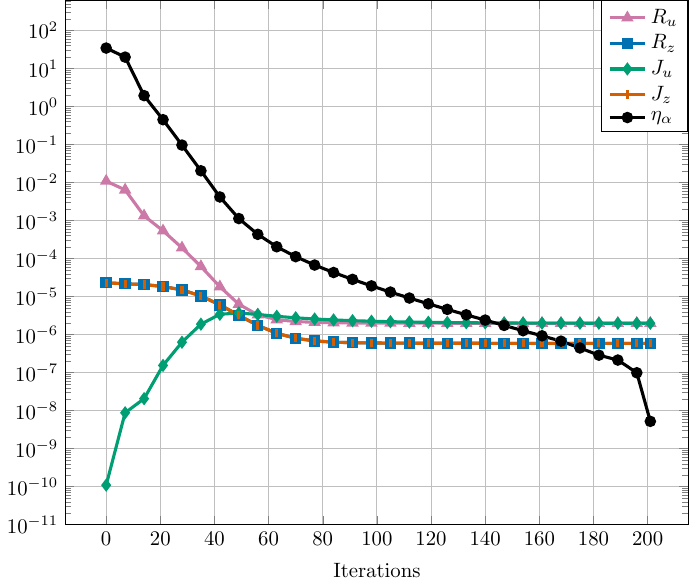}
    \caption{Estimator Components}
    \label{fig:gaussian_alpha_individual}
  \end{subfigure}\hfil
  \begin{subfigure}{0.35\textwidth}
    \includegraphics[width=\linewidth]{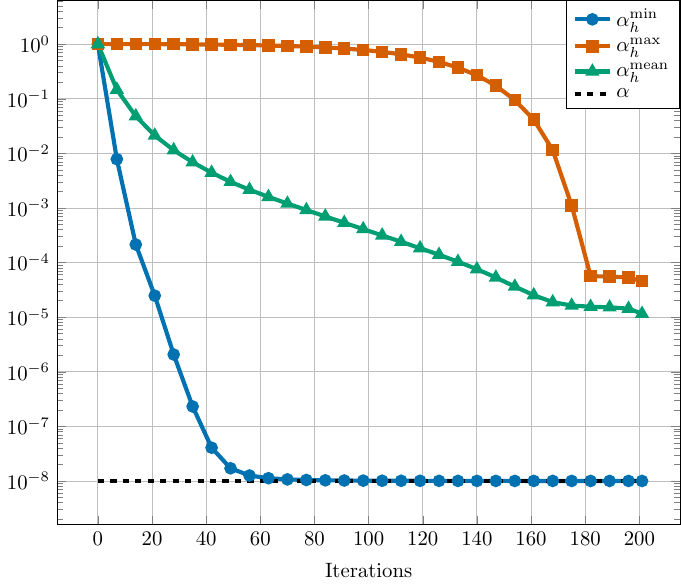}
    \caption{Parameter Ranges}
    \label{fig:gaussian_alpha_params}
  \end{subfigure}
  \caption{Example \ref{gaussian} (smooth target), \autoref{sec:regadpt}. Convergence results for the regularisation refinement only scheme. $\text{dim}V_h = 10^4$, $\rho = 0.5$ and $\alpha = 10^{-8}$, $\texttt{tol}_{\alpha} = 10^{-8}$}.
  \label{fig:gaussian_alphaonly} 
\end{figure}

\begin{figure}[h!]
  \centering
   \begin{subfigure}{\textwidth}
    \includegraphics[width=\linewidth]{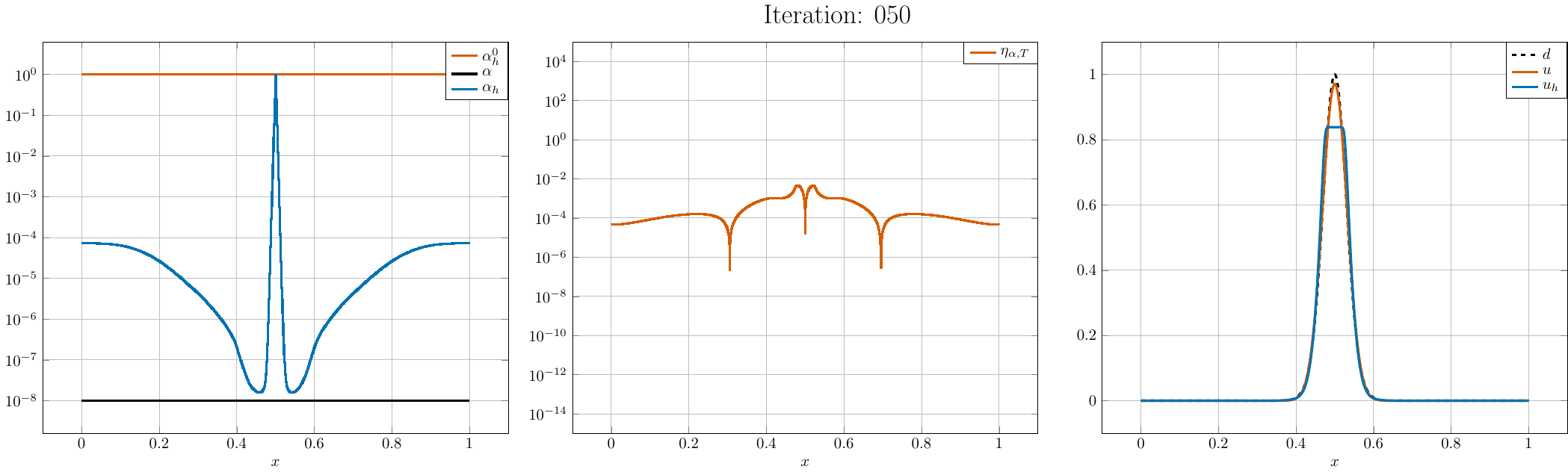}
    \label{fig:map}
  \end{subfigure}
  \begin{subfigure}{\textwidth}
    \includegraphics[width=\linewidth]{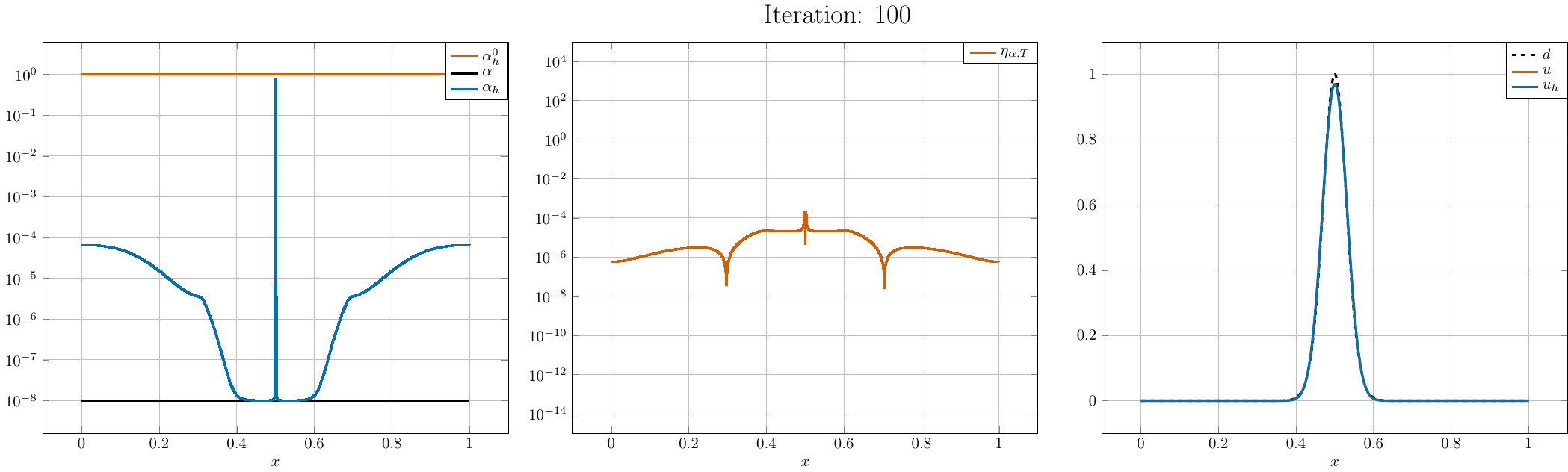}
    \label{}
  \end{subfigure}
    \begin{subfigure}{\textwidth}
    \includegraphics[width=\linewidth]{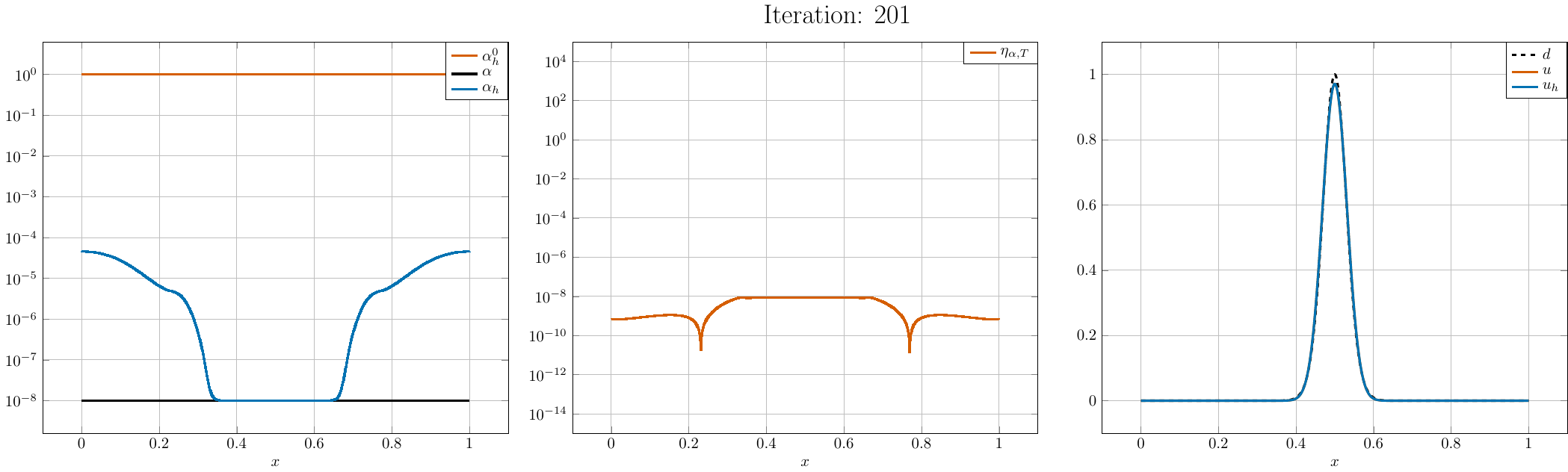}
    \label{fig:}
  \end{subfigure}
  \caption{Example \ref{gaussian} (smooth target): \autoref{sec:regadpt}, regularisation refinement only. Plots of $\alpha_h$ (left),
  $\eta_\alpha$ (middle) and the approximate solution $u_h$ compared to $u$ and $d$ (right) over the domain at three different iterations. $\text{dim}V_h = 10^4$, $\rho = 0.5$ and $\alpha = 10^{-8}$, $\texttt{tol}_\alpha = 10^{-8}$. Videos of these stills for every iteration can be found in the supplementary material.}
  \label{fig:gaussian_alphaonly_stills}
\end{figure}

\begin{figure}[h!]
  \centering
  \begin{subfigure}{0.35\textwidth}
    \includegraphics[width=\linewidth]{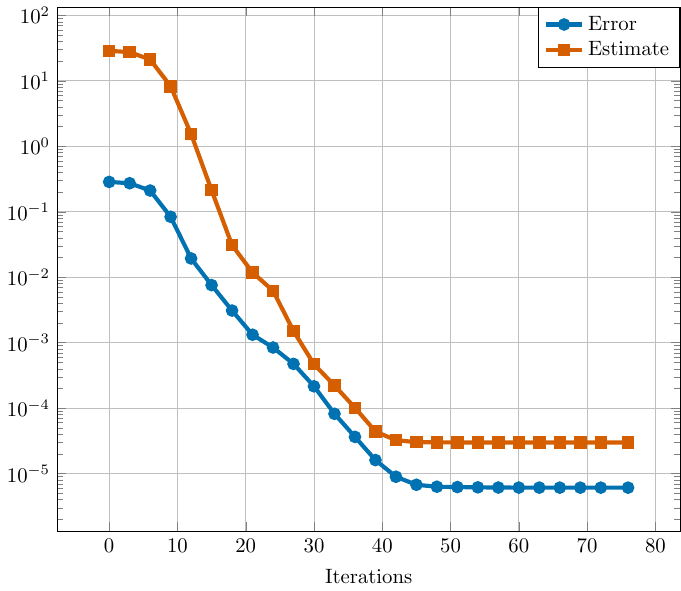}
    \caption{Error and Estimator}
    \label{fig:}
  \end{subfigure}\hfil
  \begin{subfigure}{0.35\textwidth}
    \includegraphics[width=\linewidth]{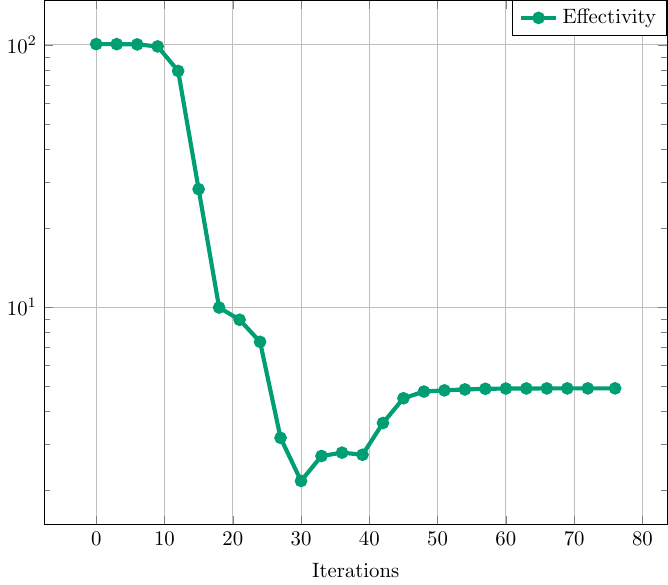}
    \caption{Effectivity}
    \label{fig:}
  \end{subfigure}
    \begin{subfigure}{0.35\textwidth}
    \includegraphics[width=\linewidth]{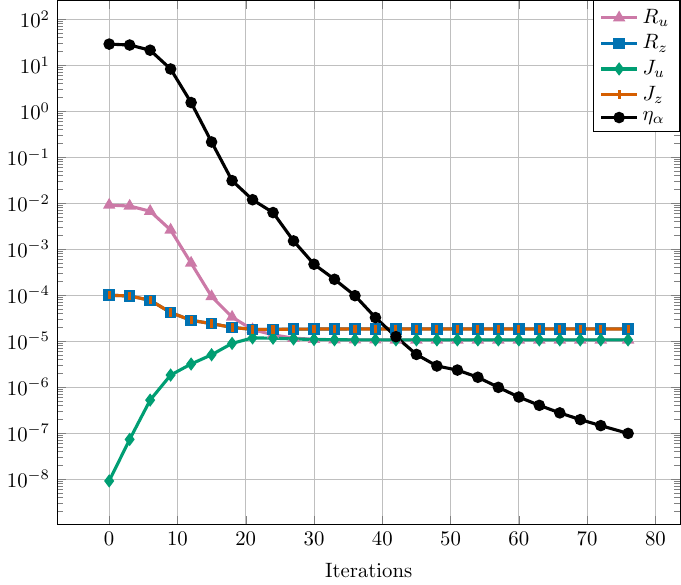}
    \caption{Estimator Components}
    \label{fig:}
  \end{subfigure}\hfil
  \begin{subfigure}{0.35\textwidth}
    \includegraphics[width=\linewidth]{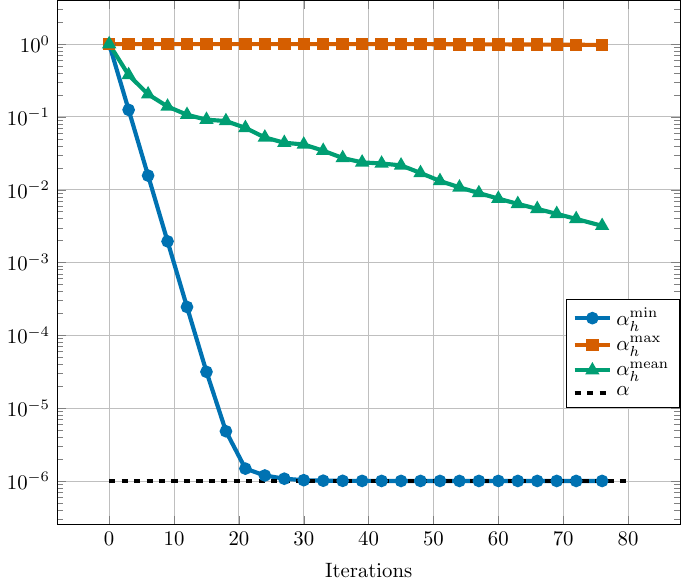}
    \caption{Parameter Ranges}
    \label{fig:}
  \end{subfigure}
  \caption{Example \ref{bl} (boundary layer), \autoref{sec:regadpt}. Convergence results for the regularisation refinement only scheme. $\text{dim}V_h = 10^4$, $\rho = 0.5$ and $\alpha = 10^{-6}$, $\texttt{tol}_\alpha = 10^{-7}$}.
  \label{fig:bl_alphaonly}
\end{figure}

\begin{figure}[h!]
  \centering
  \begin{subfigure}{\textwidth}
    \includegraphics[width=\linewidth]{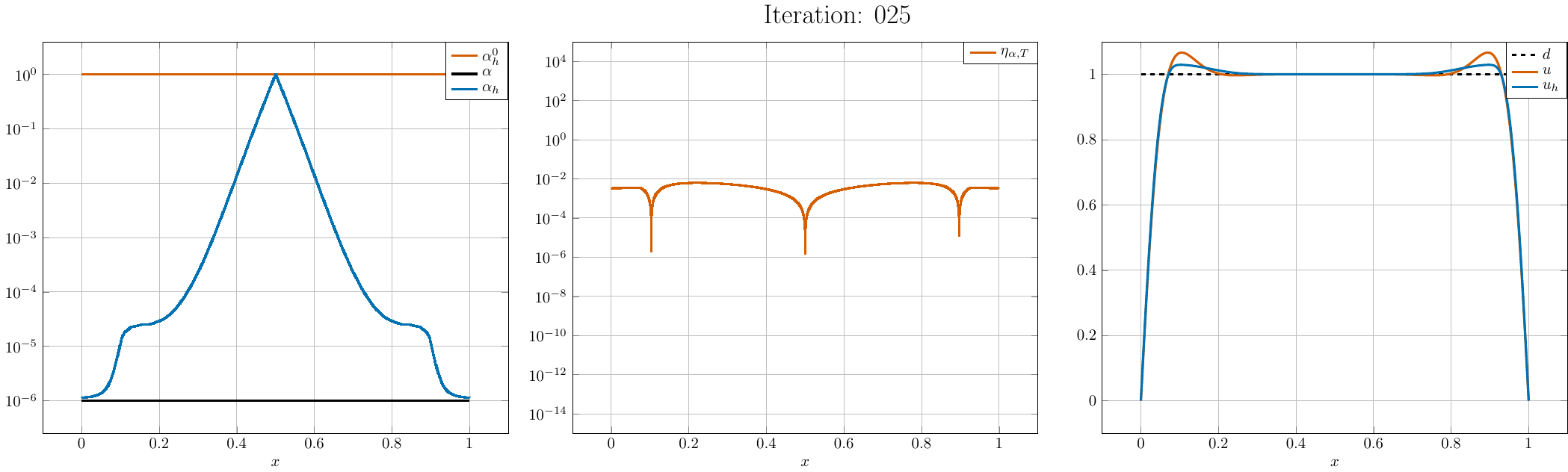}
    \label{fig:}
  \end{subfigure}
  \begin{subfigure}{\textwidth}
    \includegraphics[width=\linewidth]{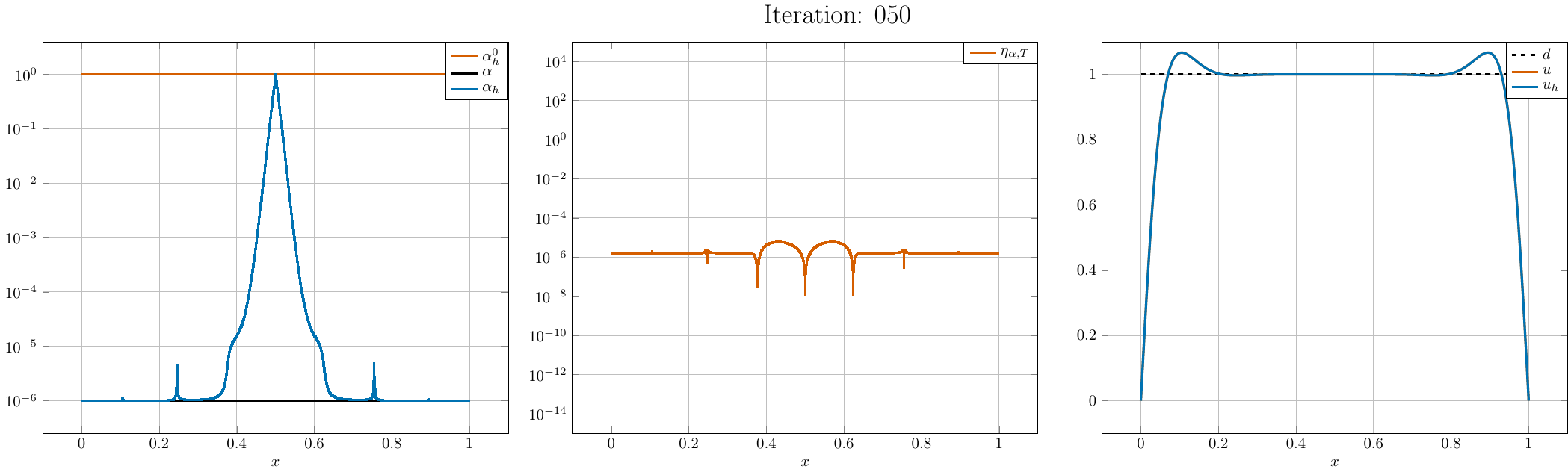}
    \label{}
  \end{subfigure}
    \begin{subfigure}{\textwidth}
    \includegraphics[width=\linewidth]{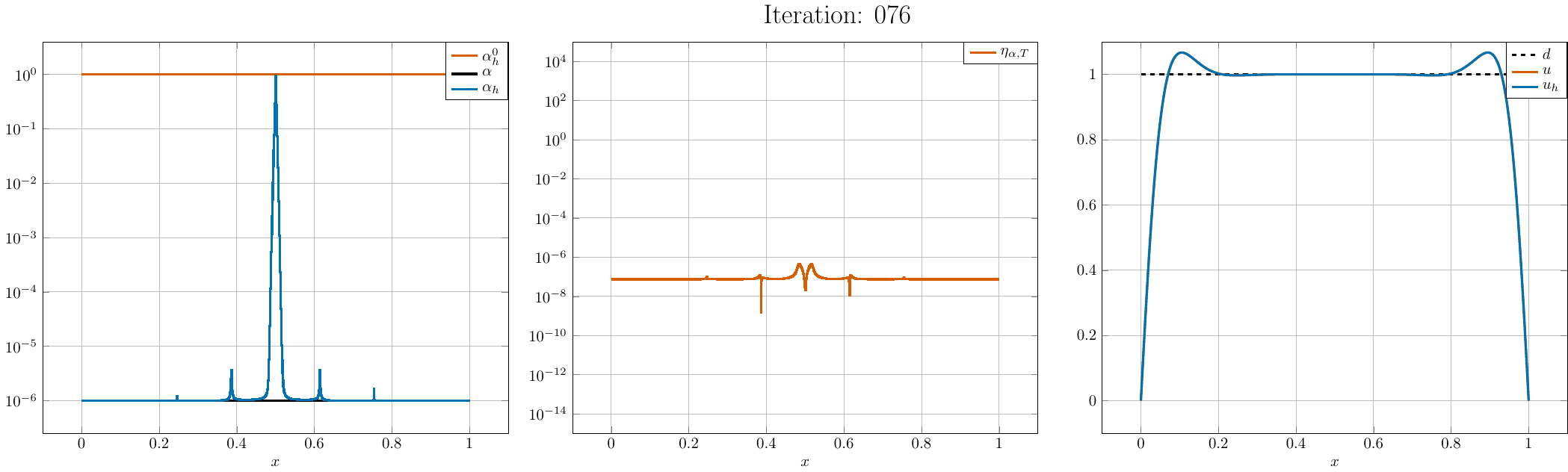}
    \label{fig:}
  \end{subfigure}
  \caption{Example \ref{bl} (boundary layer), \autoref{sec:regadpt}: regularisation refinement only. Plots of $\alpha_h$ (left), $\eta_\alpha$ (middle) and the approximate solution $u_h$ compared to $u$ and $d$ (right) over the domain at four different iterations. $\text{dim}V_h = 10^4$, $\rho = 0.5$ and $\alpha = 10^{-6}$, $\texttt{tol}_\alpha = 10^{-7}$. Videos of these stills for every iteration can be found in the supplementary material.}
  \label{fig:bl_alphaonly_stills}
\end{figure}

Figures \ref{fig:bl_alphaonly} and \ref{fig:bl_alphaonly_stills}
present the results for example \ref{bl}, with the regularization
parameter set to $\alpha = 10^{-6}$. An error tolerance of
$\texttt{tol}_{\alpha} = 10^{-7}$ was used, and convergence was reached after 76 iterations. This example exhibits boundary layer behaviour, with
large gradients near the boundaries.
From the solution plots in Figure \ref{fig:bl_alphaonly_stills}, we
observe that $\alpha_h$ tends toward $\alpha$ in the vicinity of the
boundary, as expected. Away from the boundary region, where the solution
$u$ is constant and the gradient is small, $\alpha_h$ can be chosen to
be larger, with $\max \alpha_h = 1$. Figure
\ref{fig:bl_alphaonly} shows the corresponding convergence results.
It is worth noting that for both examples, the effectivity index stabilizes at
$\qp{\eta_{h}^2 + \eta_{\alpha}^2}^{\frac{1}{2}} / \|\vec{e}\|_X
\approx 4.9$ once convergence is achieved, which is the effectivity index when
$\alpha_h = \alpha$, as identified in the previous section.

\subsection{Mesh and Regularisation Refinement Adaptive Scheme}
\label{sec:fulladaptive}
\begin{figure}[h!]
  \centering
  \begin{subfigure}{0.35\textwidth}
    \includegraphics[width=\linewidth]{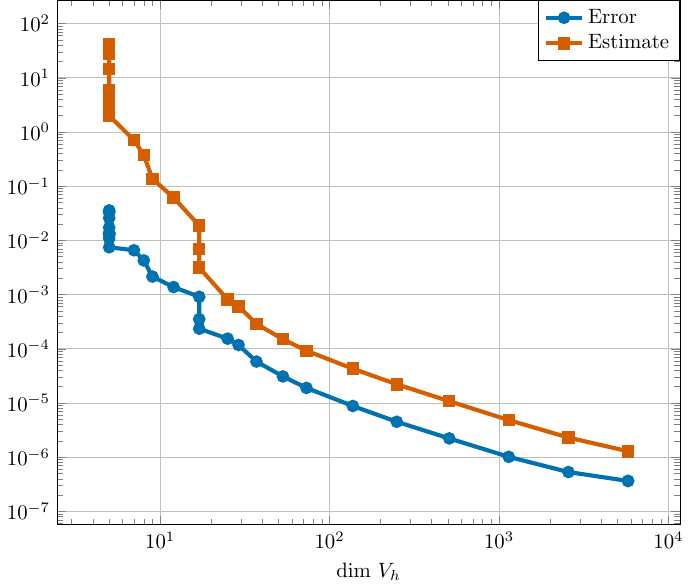}
    \caption{Error and Estimator}
    \label{fig:map}
  \end{subfigure}\hfil
  \begin{subfigure}{0.35\textwidth}
    \includegraphics[width=\linewidth]{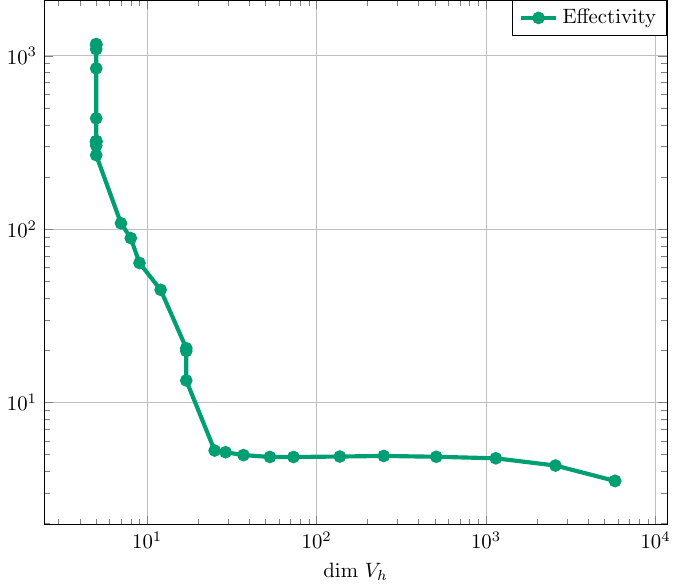}
    \caption{Effectivity}
    \label{fig:pcd}
  \end{subfigure}
    \begin{subfigure}{0.35\textwidth}
    \includegraphics[width=\linewidth]{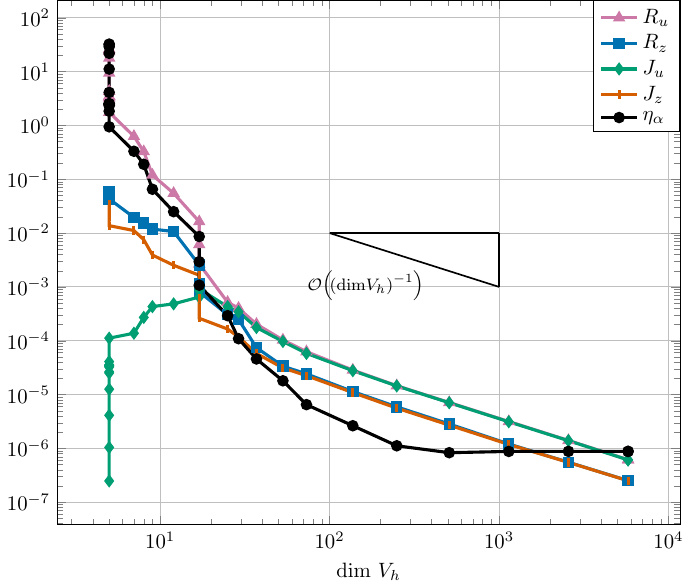}
    \caption{Estimator Components}
    \label{fig:gfindividual}
  \end{subfigure}\hfil
  \begin{subfigure}{0.35\textwidth}
    \includegraphics[width=\linewidth]{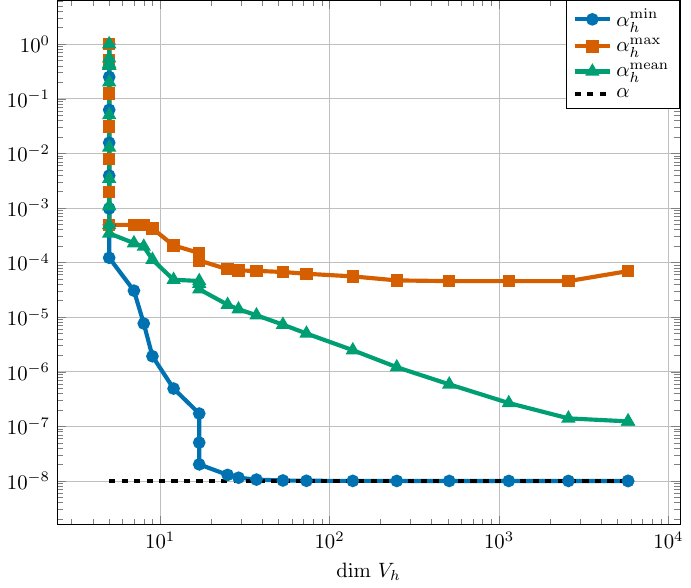}
    \caption{Parameter Ranges}
    \label{fig:pcd}
  \end{subfigure}
  \caption{Example \ref{gaussian} (smooth target), \autoref{sec:fulladaptive}: convergence results for the fully adaptive scheme. $\rho = 0.5$, $\theta = 0.5$ and $\alpha = 10^{-8}$.}
  \label{fig:gaussian_full}
\end{figure}

\begin{figure}[h!]
  \centering
  \begin{subfigure}{\textwidth}
    \includegraphics[width=\linewidth]{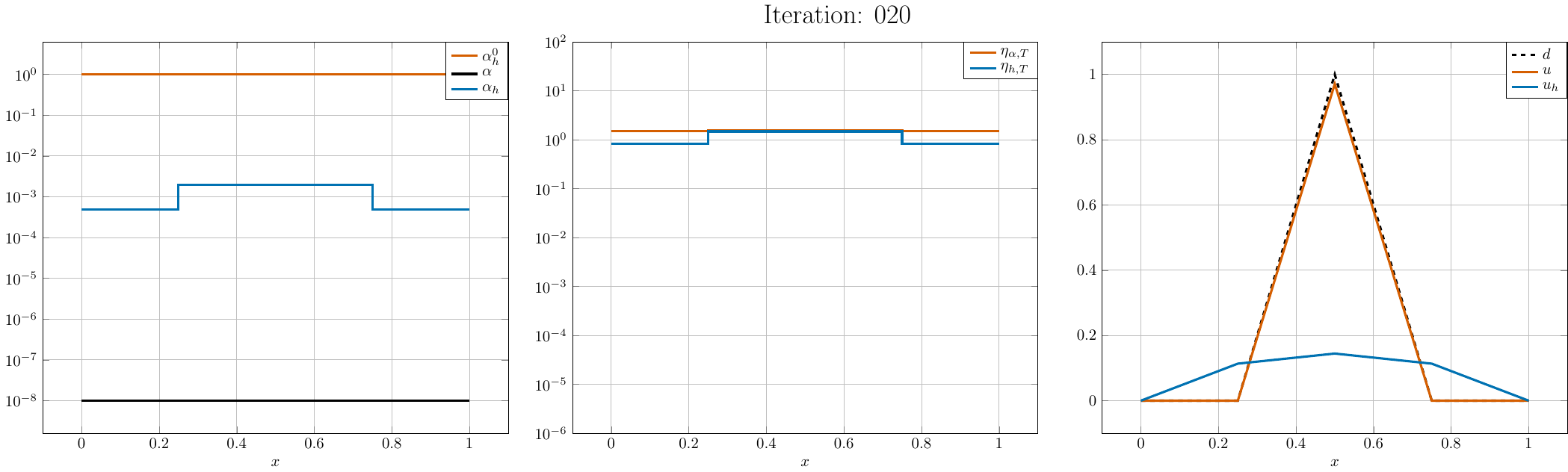}
    \label{fig:map}
  \end{subfigure}
  \begin{subfigure}{\textwidth}
    \includegraphics[width=\linewidth]{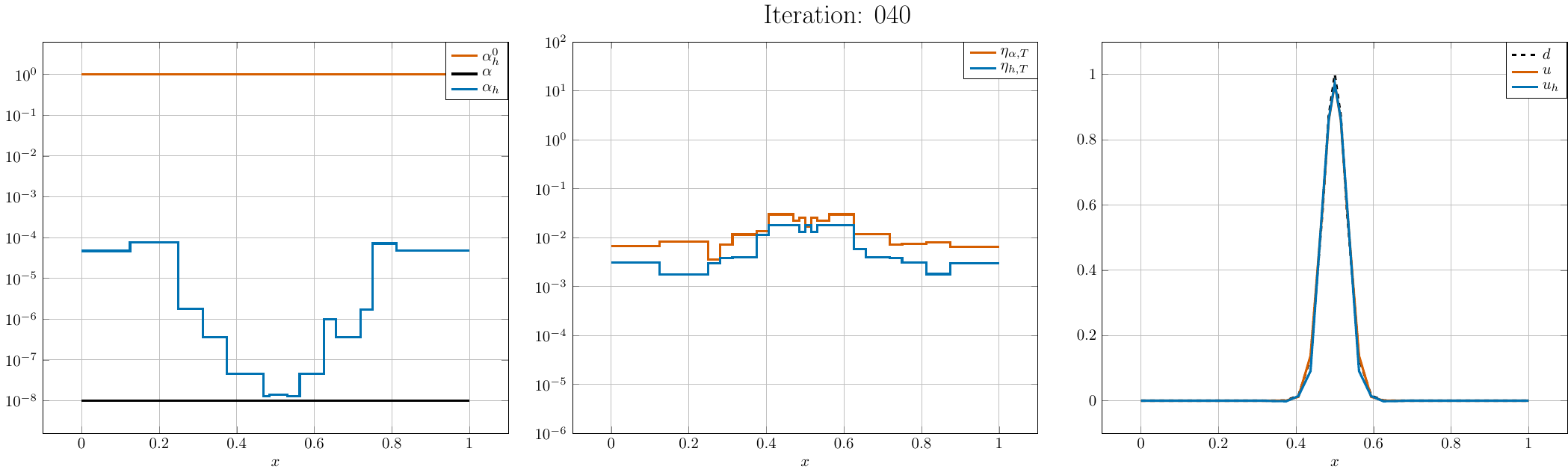}
    \label{}
  \end{subfigure}
    \begin{subfigure}{\textwidth}
    \includegraphics[width=\linewidth]{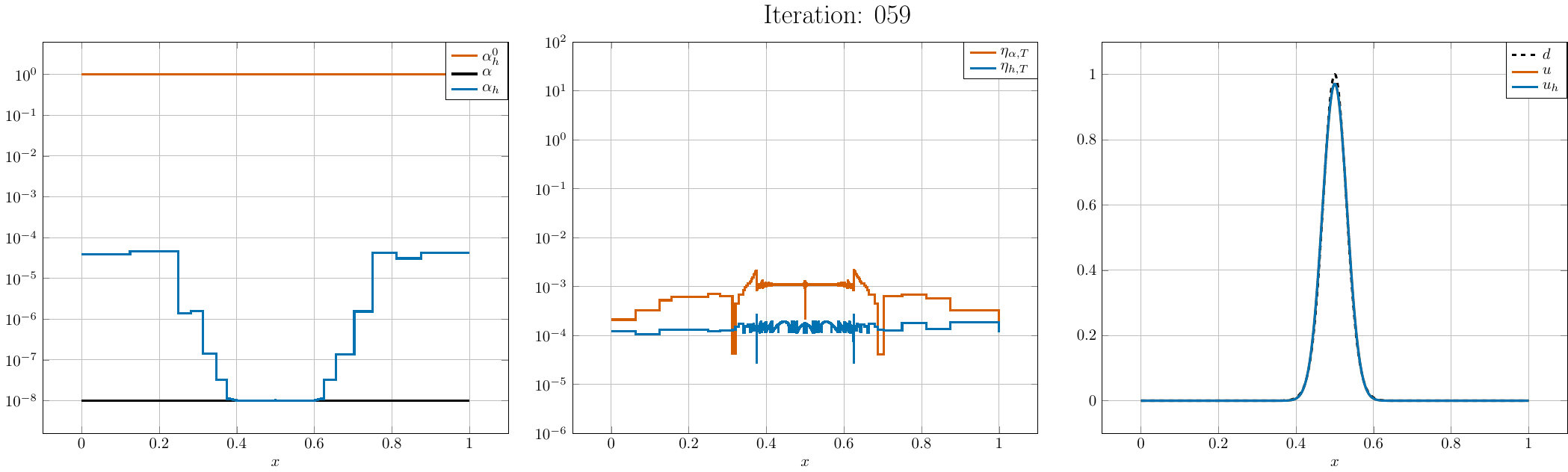}
    \label{fig:}
  \end{subfigure}
  \caption{Example \ref{gaussian} (smooth target), \autoref{sec:fulladaptive}, fully adaptive scheme. Plots of $\alpha_h$ (left), $\eta_\alpha, \eta_h$ (middle) and the approximate solution $u_h$ compared to $u$ and $d$ (right) over the domain at four different iterations. $\rho = 0.5$, $\theta = 0.5$ and $\alpha = 10^{-8}$.}
  \label{fig:gaussian_full_stills}
\end{figure}

\begin{figure}[h!]
  \centering
  \begin{subfigure}{0.35\textwidth}
    \includegraphics[width=\linewidth]{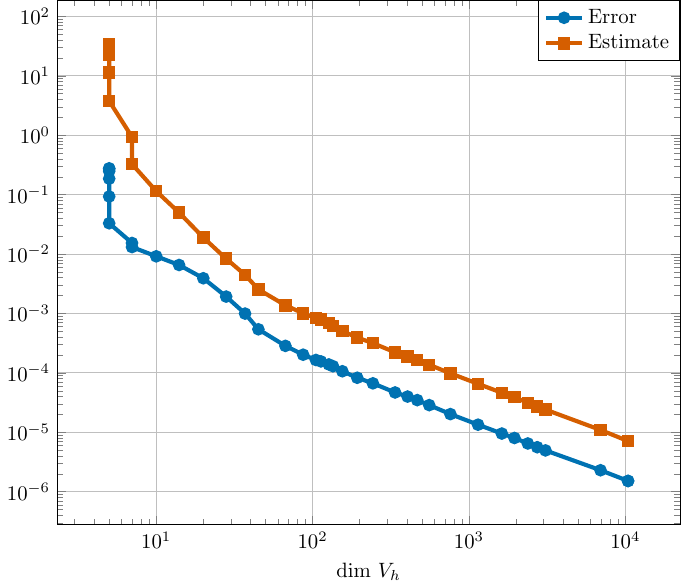}
    \caption{Error and Estimator}
    \label{fig:map}
  \end{subfigure}\hfil
  \begin{subfigure}{0.35\textwidth}
    \includegraphics[width=\linewidth]{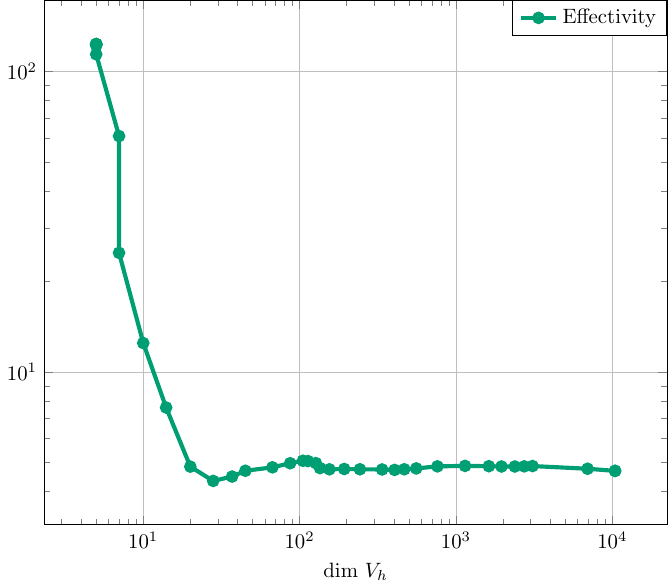}
    \caption{Effectivity}
    \label{fig:pcd}
  \end{subfigure}
    \begin{subfigure}{0.35\textwidth}
    \includegraphics[width=\linewidth]{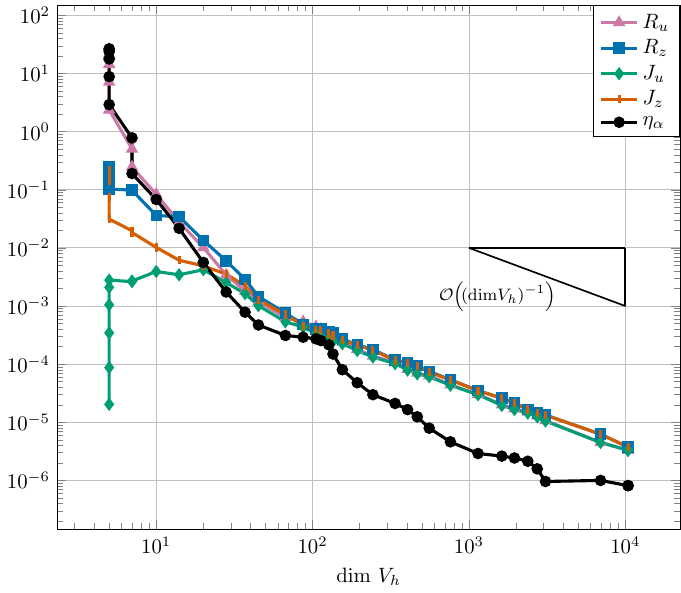}
    \caption{Estimator Components}
    \label{fig:gfindividual}
  \end{subfigure}\hfil
  \begin{subfigure}{0.35\textwidth}
    \includegraphics[width=\linewidth]{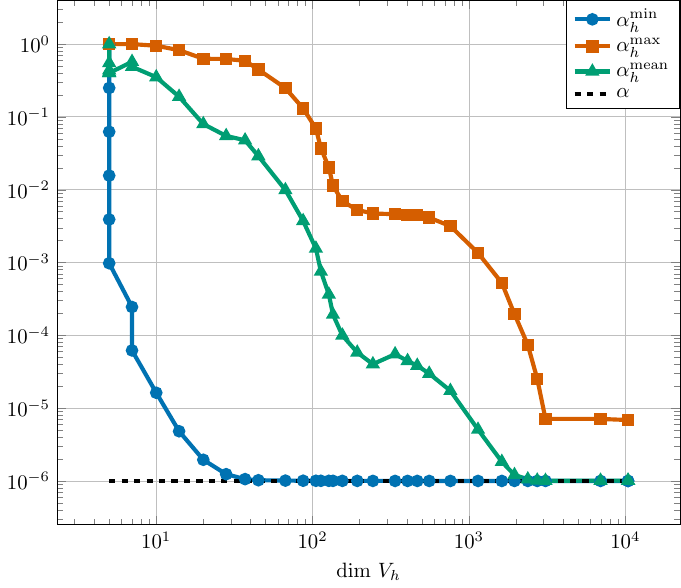}
    \caption{Parameter Ranges}
    \label{fig:pcd}
  \end{subfigure}
  \caption{Example \ref{bl} (boundary layer), \autoref{sec:fulladaptive}: convergence results for the fully adaptive scheme. $\rho = 0.5$, $\theta = 0.5$ and $\alpha = 10^{-6}$.}
  \label{fig:bl_full}
\end{figure}

\begin{figure}[h!]
  \centering
  \begin{subfigure}{\textwidth}
    \includegraphics[width=\linewidth]{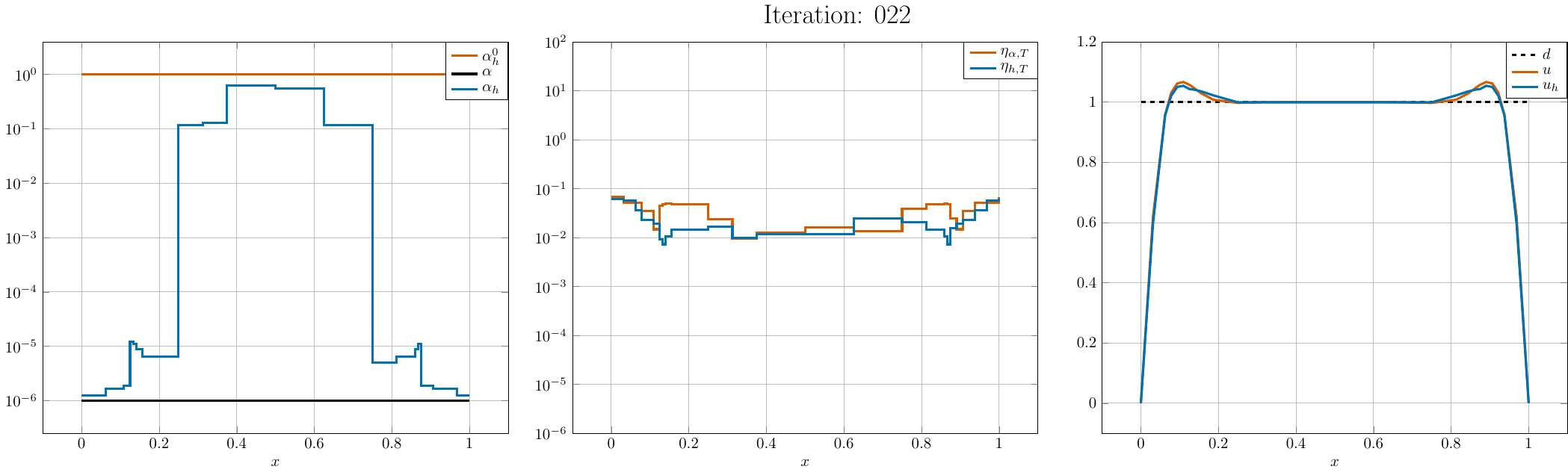}
    \label{fig:map}
  \end{subfigure}
  \begin{subfigure}{\textwidth}
    \includegraphics[width=\linewidth]{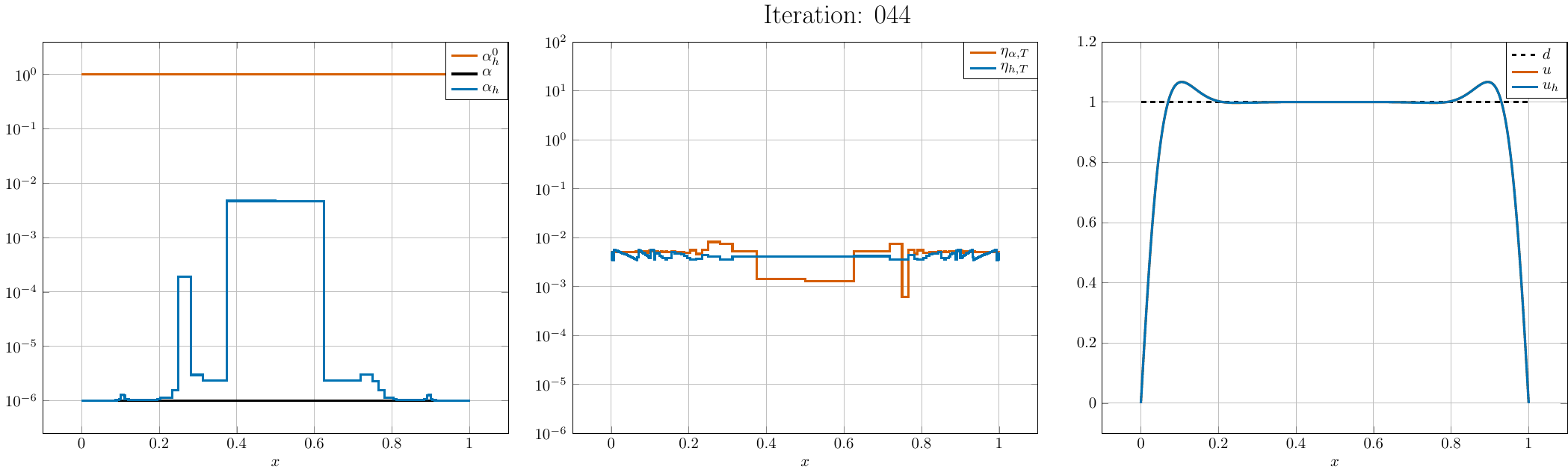}
    \label{}
  \end{subfigure}
    \begin{subfigure}{\textwidth}
    \includegraphics[width=\linewidth]{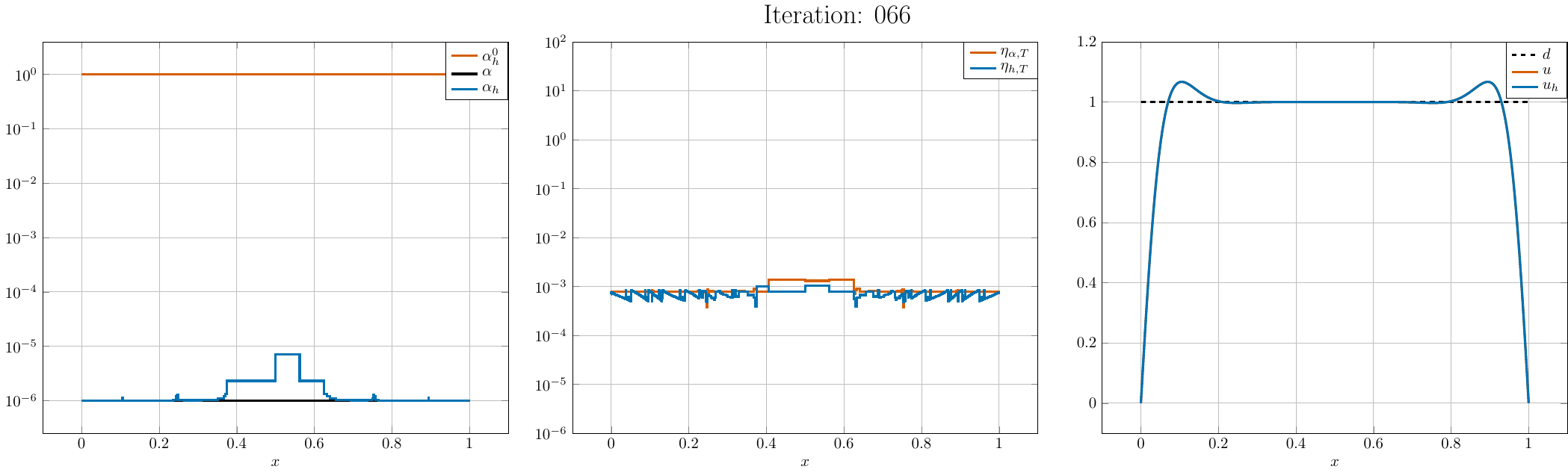}
    \label{fig:}
  \end{subfigure}
  \caption{Example \ref{bl} (boundary layer), \autoref{sec:fulladaptive}, fully adaptive scheme.. Plots of $\alpha_h$ (left), $\eta_\alpha, \eta_h$ (middle) and the approximate solution $u_h$ compared to $u$ and $d$ (right) over the domain at four different iterations. $\rho = 0.5$, $\theta = 0.5$ and $\alpha = 10^{-6}$.}
  \label{fig:bl_full_stills}
\end{figure}

We repeat the same experiments as in \autoref{sec:regadpt}, this time for the full adaptive algorithm. For both examples, an initial choice of $\alpha_h^0 = 1$, regularisation refinement fraction and mesh refinement ratio of $\rho, \theta = 0.5$. 

The results for example \ref{gaussian} (smooth target) can be seen in Figures \ref{fig:gaussian_full} and \ref{fig:gaussian_full_stills}.  The regularisation parameter was set to $\alpha = 10^{-8}$ and error tolerances of $\texttt{tol}_\alpha, \texttt{tol}_h = 10^{-6}$ were used.
\newpage

Convergence was reached after 59 iterations of Algorithm \ref{alg:1} with the final mesh having dimension $\text{dim}V_h \approx 5\times 10^4$. 
From examining the solution plots in Figure \ref{fig:gaussian_full_stills} it is evident that $\alpha_h$ decreases most in the region surrounding the Gaussian peak, as expected. However, in this experiment, the mesh is also being adaptively refined. The mesh is refined most in the vicinity of the Gaussian peak also. The method balances the regularisation and discretisation errors, which can be observed by looking at the estimator plots in Figure \ref{fig:gaussian_full_stills} (middle column). Both $\eta_{\alpha,T}$ and $\eta_{h,T}$ maintain similar orders of magnitude across the domain as the algorithm progresses. Examining Figure \ref{fig:gfindividual}, towards the end of the iterations, $\eta_\alpha$ decreases much faster than the components of $\eta_h$.

The results for example \ref{bl} can be seen in Figures \ref{fig:bl_full} and \ref{fig:bl_full_stills}. The regularisation parameter was set to $\alpha = 10^{-6}$ and error tolerances of and error tolerances of $\texttt{tol}_\alpha = 10^{-6}, \texttt{tol}_h = 10^{-5}$ were used. 
Convergence was reached after 70 iterations of algorithm \ref{alg:1} with the final mesh having dimension $\text{dim}V_h \approx 10^5$. The regularisation and mesh refinement both focus mainly on the area surrounding the boundary layer, as anticipated.

\subsection{A 2D Example}
We now look at a 2D example. We consider the square domain $\Omega = (0,1) \times (0,1) $. Let the spatial coordinates be denoted by $x = (x_1, x_2)$. 
\example[2D Example]{
\label{2D}
We define the piecewise-constant discontinuous target
\begin{equation} \label{eq:d2D}
  d({x}) = 
  \begin{cases}
      1, \threespace \text{ if }x \in \mathcal{C},\\
      0, \threespace \text{ otherwise, }
  \end{cases}
\end{equation}
where $\mathcal{C}$ is a disc centred at $(c_1, c_2)$ with radius $r$ defined by
\begin{equation*}
  \mathcal{C} := \big \{x \in \Omega \,\, \big |\,\, (x_1 - c_1)^2 + (x_2 - c_2)^2 \leq r^2     \big \}. 
\end{equation*}
This example does not have a closed form solution. For numerical experiments we choose $c_1,c_2= 0.5$ and radius $r=0.1$.
}

\begin{figure}[h!]
  \centering
  \begin{subfigure}[t]{0.32\textwidth}
    \includegraphics[width=\linewidth]{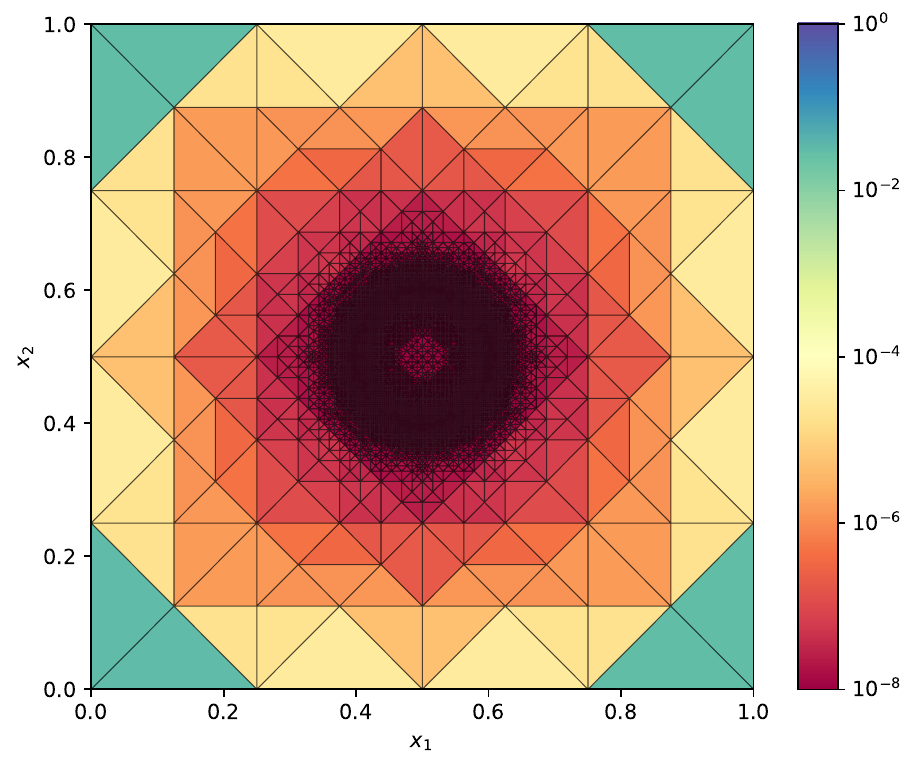}
    \caption{Colour plot of $\alpha_h$ on the mesh.}
    \label{fig:alphaonmesh}
  \end{subfigure}
  \begin{subfigure}[t]{0.32\textwidth}
    \includegraphics[width=\linewidth]{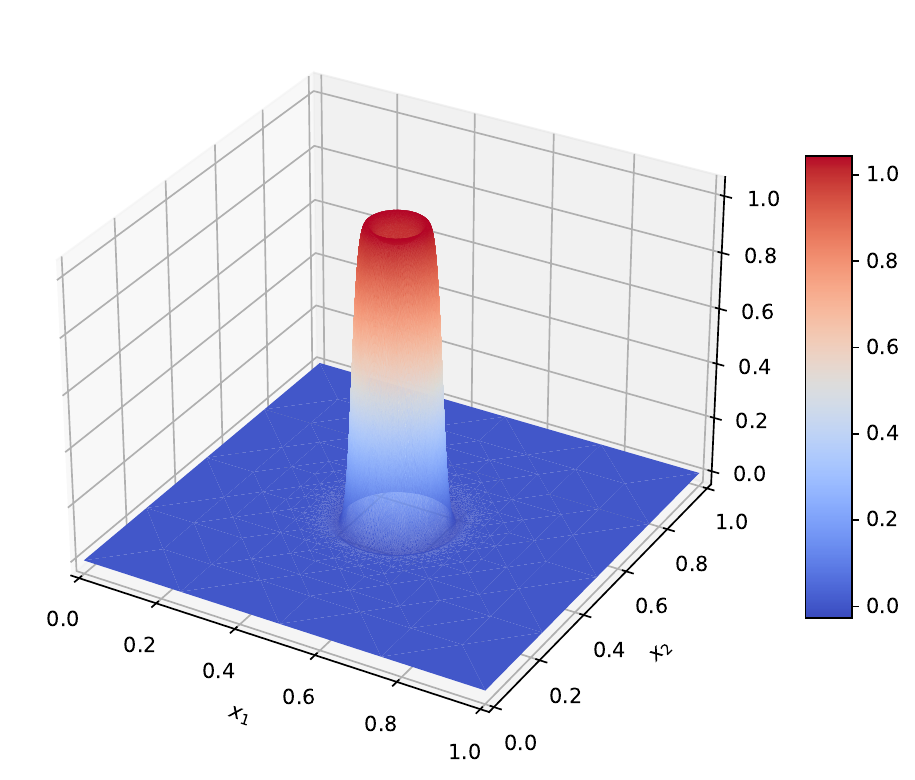}
    \caption{$u_h$}
    \label{fig:2Duh}
  \end{subfigure}
    \begin{subfigure}[t]{0.32\textwidth}
    \includegraphics[width=\linewidth]{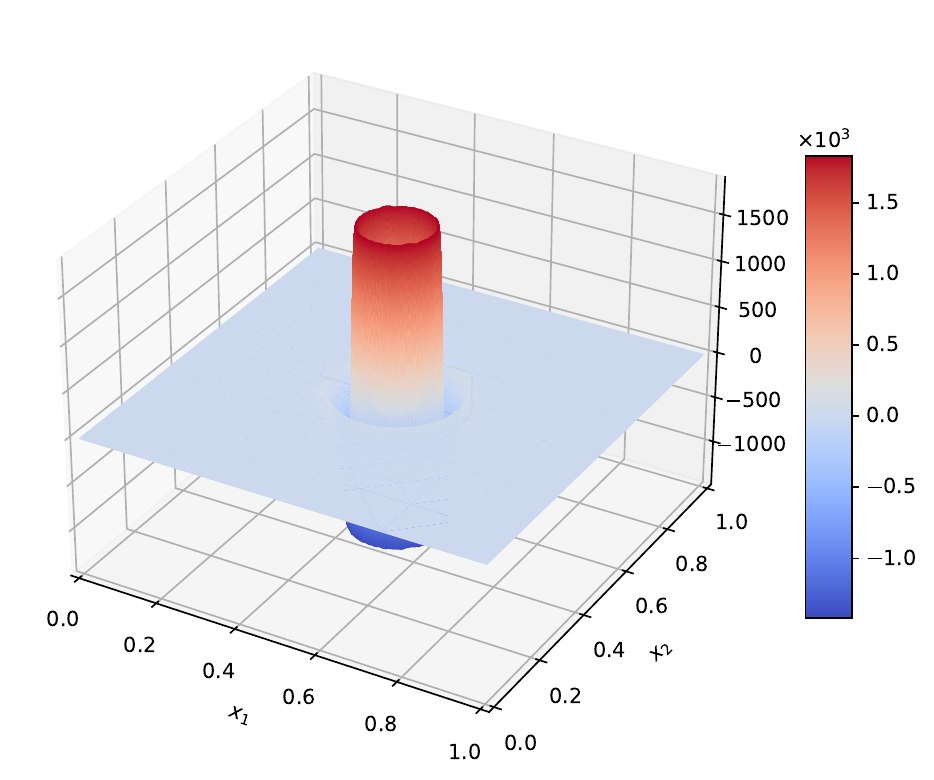}
    \caption{$f_h$}
    \label{fig:2Dfh}
  \end{subfigure}
  \hfil
  \begin{subfigure}[t]{0.32\textwidth}
    \includegraphics[width=\linewidth]{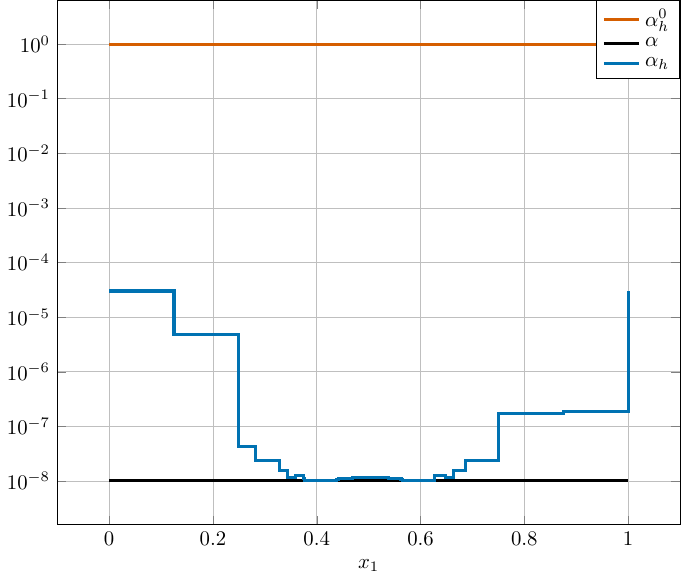}
    \caption{Slice of $\alpha_h$ along $x_2 = 0.5$}
    \label{fig:1Dalphah}
  \end{subfigure}
  \begin{subfigure}[t]{0.32\textwidth}
    \includegraphics[width=\linewidth]{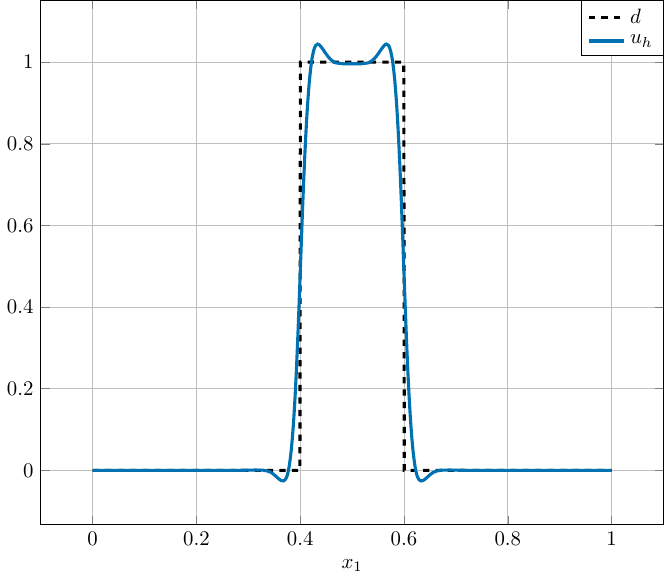}
    \caption{Slice of $u_h$ along $x_2 = 0.5$ compared to the target $d$ \eqref{eq:d2D}.}
    \label{}
  \end{subfigure}
    \begin{subfigure}[t]{0.32\textwidth}
    \includegraphics[width=\linewidth]{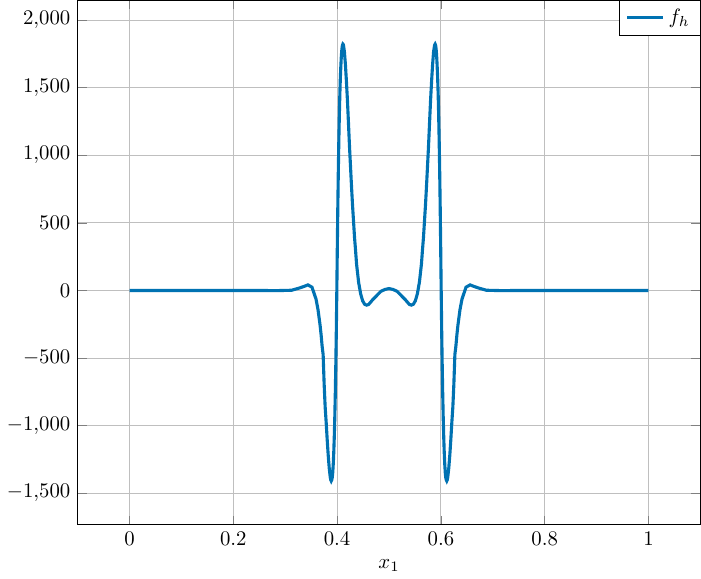}
    \caption{Slice of $f_h$ along $x_2 = 0.5$}
    \label{fig:1Dfh}
  \end{subfigure}
  \caption{Results for the full adaptive scheme on Example \ref{2D} (2D) with $\alpha = 10^{-8}$, final iteration. Algorithm parameters: $\rho = 0.5$, $\theta = 0.2$, $\texttt{tol}_\alpha = 10^{-5}$, $\texttt{tol}_h = 10^{-4}$. An animation of these results can be found in the supplementary material.}
  \label{fig:2Dresults}
\end{figure}

The full adaptive algorithm was run for Example \ref{2D}, the results of which can be seen in Figure \ref{fig:2Dresults}. This figure displays the output for the final iteration of the algorithm. A video of the results for each iteration can be found in the supplementary material. A regularisation parameter of $\alpha = 10^{-8}$ was chosen for this problem. The following algorithm parameters were used: 
a reduction rate of $\rho = 0.5$, a mesh refinement ratio of $\theta = 0.2$, error tolerances of $\texttt{tol}_\alpha = 10^{-5}$ and $\texttt{tol}_h = 10^{-4}$, and an initial choice of $\alpha_h^0 = 1$.

The algorithm converged after 59 iterations. The final mesh had a dimension of $\text{dim}V_h \approx 7 \times 10^5$. Figure \ref{fig:alphaonmesh} showcases the structure of the mesh. 
The majority of the mesh-refinement occurs in the vicinity of the discontinuity, as expected. This figure also showcases $\alpha_h$ on the mesh displayed on a log-scaled colour plot. In a similar vein, $\alpha_h$ takes on its smallest values in the vicinity of the discontinuity due to the sharp gradients in this region. The largest values of $\alpha_h$ are achieved at the corners of the domain, with $\max \alpha_h \sim \mathcal{O}(10^{-2})$. 

Figure \ref{fig:2Duh} displays the finite element approximation of the state variable $u_h$. We also showcase the reconstruction of the control variable $f_h$ in Figure \ref{fig:2Dfh}. 
1D slices of the $\alpha_h, u_h$ and $f_h$ along the line $x_2 = 0.5$ are given in Figures \ref{fig:1Dalphah}--\ref{fig:1Dfh}.

\subsection{Condition Number}
\begin{figure}[h!]
  \centering
  \begin{subfigure}[t]{0.32\textwidth}
    \includegraphics[width=\linewidth]{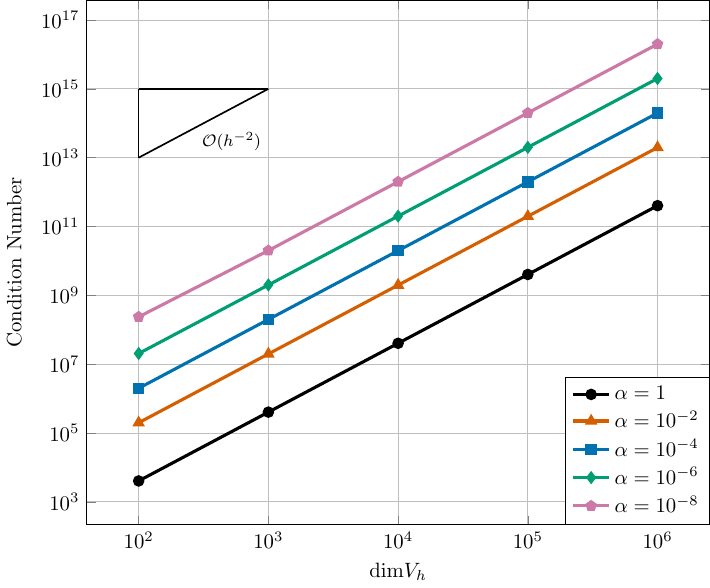}
    \caption{Condition Number}
    \label{fig:conda}
  \end{subfigure}\hfil
  \begin{subfigure}[t]{0.32\textwidth}
    \includegraphics[width=\linewidth]{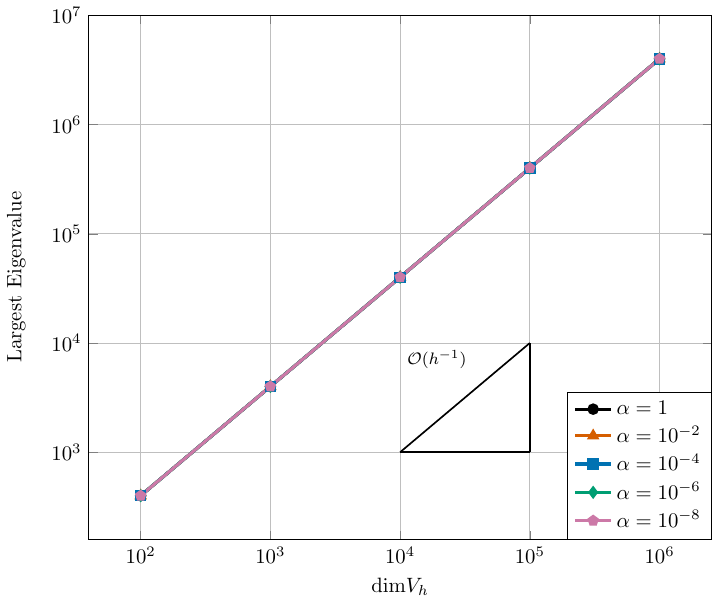}
    \caption{Largest Eigenvalue}
    \label{fig:condb}
  \end{subfigure}\hfil
    \begin{subfigure}[t]{0.32\textwidth}
    \includegraphics[width=\linewidth]{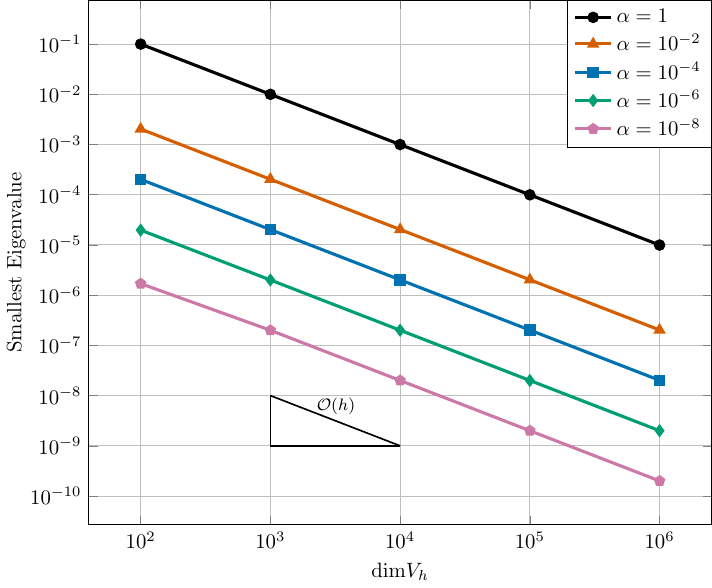}
    \caption{Smallest Eigenvalue}
    \label{fig:condc}
  \end{subfigure}\hfil
  \caption{The condition number, largest and smallest eigenvalues of the finite element system matrix obtained from solving \eqref{eq:bhform} with $\alpha_h = \alpha$. These were evaluated on a uniform mesh for different values of $\text{dim}V_h$ and for different values of $\alpha$.}
  \label{fig:cond}
\end{figure}

We conclude our experimentation with a study of the behaviour of the condition number of the finite element system matrix associated with solving \eqref{eq:bhform}. We first examine this for no adaptive regularisation $(\alpha_h = \alpha$) and investigate how the condition number scales with respect to $h$ and $\alpha$ in 1D. 
The condition number was estimated by the ratio of the largest and smallest eigenvalues of the system matrix. The eigenvalues were computed using SLEPc \cite{Hernandez:2005:SSF}.
The results are displayed in Figure \ref{fig:cond}. 
Figure \ref{fig:condc} displays the smallest eigenvalue as a function of the mesh size for a variety of $\alpha$'s. We can empirically deduce that the smallest eigenvalue scales like
\begin{equation*}
  \text{smallest eigenvalue} \sim \mathcal{O}\qp{h \alpha^{\frac{1}{2}}}.
\end{equation*}
Figure \ref{fig:condb} showcased the largest eigenvalue as a function of the mesh size for a variety of $\alpha$'s. The plots for the different choices of $\alpha$ coincide. Thus, we can deduce that when $\alpha < 1$ there is no relationship between $\alpha$ and the largest eigenvalue. The largest eigenvalue is only dependent on the mesh size and scales in the following way
\begin{equation*}
  \text{largest eigenvalue} \sim \mathcal{O}\qp{h^{-1}}.
\end{equation*}
Therefore, we can empirically deduce
that when $\alpha < 1$ the condition number scales in the following way,
\begin{equation*}
    \text{cond} \sim \mathcal{O}\qp{h^{-2} \alpha^{-\frac{1}{2}}}.
\end{equation*}
This results indicates that increasing the value of the regularisation parameter improves the conditioning of the linear system. This supports the use of  
adaptive regularisation in conjunction with adaptive mesh refinement.

\begin{figure}[h!]
  \centering
  \begin{subfigure}[t]{0.32\textwidth}
    \includegraphics[width=\linewidth]{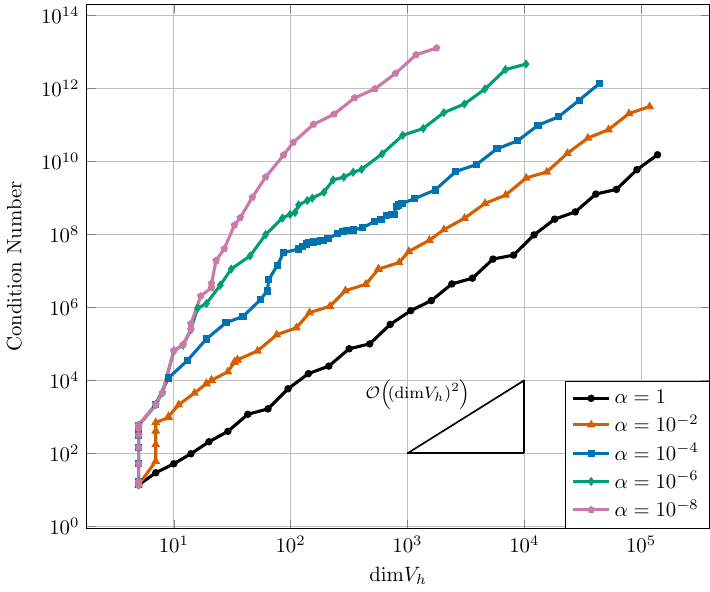}
    \caption{Condition Number}
    \label{fig:adaptcond}
  \end{subfigure}
  \hfil
  \begin{subfigure}[t]{0.32\textwidth}
    \includegraphics[width=\linewidth]{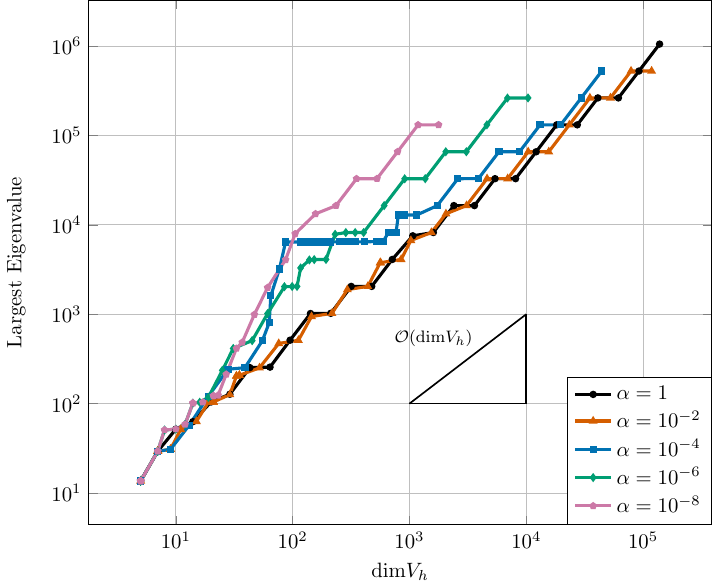}
    \caption{Largest Eigenvalue}
    \label{fig:adapteigmax}
  \end{subfigure}
  \hfil
    \begin{subfigure}[t]{0.32\textwidth}
    \includegraphics[width=\linewidth]{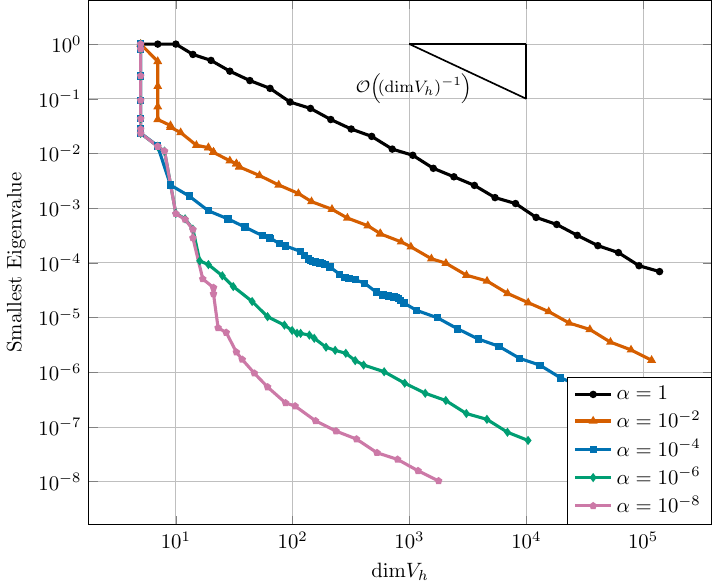}
    \caption{Smallest Eigenvalue}
    \label{fig:adapteigmin}
  \end{subfigure}
  \hfil
  \begin{subfigure}[t]{0.32\textwidth}
    \includegraphics[width=\linewidth]{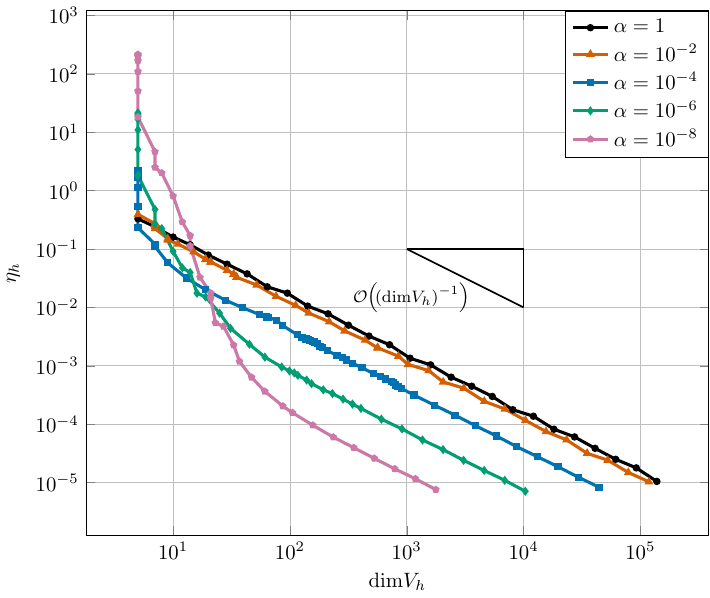}
    \caption{$\eta_h$}
    \label{fig:adaptetah}
  \end{subfigure}
  \hfil
    \begin{subfigure}[t]{0.32\textwidth}
    \includegraphics[width=\linewidth]{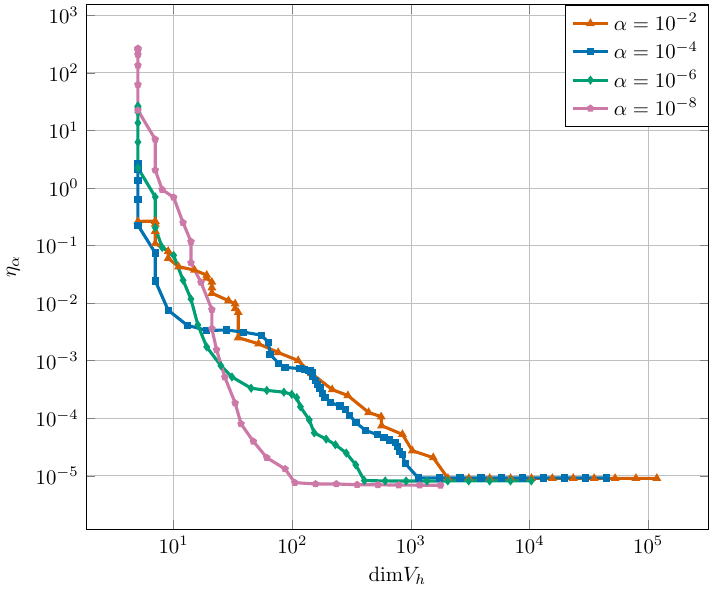}
    \caption{$\eta_\alpha$}
    \label{fig:adaptetaa}
  \end{subfigure}

  \caption{The results of applying the adaptive algorithm to Example \ref{bl} for a variety of choices of $\alpha$. The top row displays the condition number, largest and smallest eigenvalues of the finite element system matrix plotted against dim$V_h$. 
  The bottom row showcases the estimator $\eta_h$ and $\eta_\alpha$ against dim$V_h$. Algorithm parameters: $\texttt{tol}_{\alpha}, \texttt{tol}_{h} = 10^{-5}$, $\alpha_h^0 = 1$, $\theta = 0.5$, $\rho = 0.3$.}
  \label{fig:condadaptive}
\end{figure}

Lastly, we consider the condition number of the system
matrix for the adaptive scheme. We examine the boundary layer
example presented in Example \ref{bl} for different choices of
$\alpha$. The error tolerances are set to $\texttt{tol}_{\alpha}$
and $\texttt{tol}_{h} = 10^{-5}$, with an initial choice of
$\alpha_h^0 = 1$. A mesh refinement ratio of $\theta = 0.5$
and a regularisation reduction factor of $\rho = 0.3$ were
used. The results are presented in Figure \ref{fig:condadaptive}.

For the case $\alpha = 1$, this setup is equivalent to
solving the system with adaptive mesh refinement only, since the
initial choice satisfies $\alpha_h^0 = \alpha$. In this case,
the estimator $\eta_\alpha = 0$, and is therefore excluded from
Figure \ref{fig:adaptetaa}.  

We observe that once $\eta_\alpha < \texttt{tol}_\alpha$, the smallest eigenvalue begins to decrease
linearly with $\dim V_h$, as seen in the uniform refinement
case. Additionally, the largest eigenvalues no longer overlap,
indicating a deviation from the uniform behaviour. Finally, we note
that smaller values of $\alpha$ require less mesh
refinement. This occurs because as $\alpha$ decreases, the
problem becomes more singularly perturbed, leading to a more
localized region where the solution varies significantly.

\section{Conclusions}
In this work, we propose a novel approach to regularising
  PDE-constrained optimal control problems. These problems typically
  require a small regularisation parameter, making them equivalent to
  solving a singularly perturbed problem. Our method is based on an
  inconsistent approximation of the singular perturbation, allowing
  for a stable numerical approximation. Specifically, we replace the
  small parameter with one that varies spatially and can be chosen
  adaptively. We derive rigorous upper and lower a posteriori error
  bounds for a finite element scheme based on this inconsistent
  formulation and demonstrate the efficacy of the method through
  numerical examples, including smooth targets with local support,
  boundary layers and discontinuous targets. Our results highlight how
  this approach balances regularisation and discretisation errors.

  In this work, we restrict our analysis to a basic optimal
  control problem: regularisation using the $\leb{2}$-norm of the
  control, with the PDE constraint given by the Poisson equation. A
  natural extension would be to consider weaker regularisation norms,
  such as energy regularisation, as in
  \cite{langer2024adaptive,langer2022robust}. Additionally,
  investigating the case where the regularisation parameter varies
  spatially, $\alpha = \alpha(x)$, is of particular interest. In this
  setting, higher-order polynomial approximations of $\alpha_h$ could
  improve the convergence of the inconsistency term, motivated by
  Lemma \ref{lem:strang}.  
  
  Another avenue for future work is to
  explore coupling the choice of $\alpha_h$ to the mesh estimator
  $\eta_h$. While we opted to keep them independent to clarify their
  individual roles, an alternative approach could involve adapting
  $\alpha_h$ alongside mesh coarsening, analogous to techniques used
  in evolutionary problems \cite[cf.]{lakkis2012gradient}.

\section*{Acknowledgements}
JP is supported by a scholarship from the EPSRC Centre for Doctoral Training in Statistical Applied Mathematics at Bath (SAMBa), under the project EP/S022945/1. TP received support from the EPSRC programme grant EP/W026899/1 and was also
supported by the Leverhulme RPG-2021-238 and EPSRC grant EP/X030067/1.

\FloatBarrier
\printbibliography

\end{document}